\documentclass[11pt]{article}
\usepackage{amsfonts}
\usepackage{amsmath}
\usepackage{graphicx}
\usepackage{bbm}
\usepackage[normalem]{ulem}
\usepackage[textsize=small]{todonotes}
\usepackage{todonotes}
\usepackage{enumerate}

\numberwithin{equation}{section}

\definecolor{aqua}{rgb}{0.0, 1.0, 1.0}
\definecolor{boo}{rgb}{1.0, 0.0, 1.0}

\addtolength{\oddsidemargin}{-0.40in}
    \addtolength{\evensidemargin}{-0.40in}
    \addtolength{\textwidth}{0.90in}
    \addtolength{\topmargin}{-0.50in}
    \addtolength{\textheight}{1.00in}

\addtolength{\oddsidemargin}{-0.40in}
    \addtolength{\evensidemargin}{-0.40in}
    \addtolength{\textwidth}{0.90in}
    \addtolength{\topmargin}{-0.50in}
    \addtolength{\textheight}{1.00in}

\setcounter{MaxMatrixCols}{10}
\newtheorem{theorem}{Theorem}[section]

\newtheorem{definition}[theorem]{Definition}

\newtheorem{lemma}[theorem]{Lemma}

\newtheorem{proposition}[theorem]{Proposition}

\newtheorem{assumption}[theorem]{Assumption}
\newenvironment{proof}[1][Proof]{\textbf{#1.} }{\ \rule{0.5em}{0.5em}}

\newtheorem{preremark}{Remark}[section]
  \newenvironment{remark}%
    {\begin{preremark}\rm}{\end{preremark}}

 \newcommand{\RR}{\mathbb{R}} 
  \newcommand{\cle}{\mathcal{E}} 
   \newcommand{\clg}{\mathcal{G}}  
  \newcommand{\cla}{\mathcal{A}} 
   \newcommand{\clh}{\mathcal{H}} 
  \newcommand{\clc}{\mathcal{C}} 
   \newcommand{\Om}{\Omega}
   \newcommand{\veps}{\varepsilon}
   \newcommand\st{\bgroup\markoverwith{\textcolor{red}{\rule[0.5ex]{2pt}{1.5pt}}}\ULon}
   \definecolor{gold}{RGB}{178,34,34}
   \definecolor{bl}{RGB}{34,178,178}

\begin{document}
\title{\textbf{Large deviation principles for stochastic dynamical systems with a fractional Brownian noise}}
\author{Amarjit Budhiraja and Xiaoming Song}

\date{}
\maketitle
\begin{abstract}
We study small noise large deviation asymptotics for stochastic differential equations with a multiplicative noise given as a fractional Brownian motion $B^H$ with Hurst parameter $H>\frac12$. 
The solutions of the stochastic differential equations are defined pathwise under appropriate conditions on the coefficients.
The ingredients in the proof of the large deviation principle, which include  a  variational representation for nonnegative functionals of fractional Brownian motions and a general sufficient condition for a LDP for a collection of functionals of a fractional Brownian motions, have a
 broader applicability than the model considered here.

\vskip.2cm \noindent {\it Keywords:} Fractional Brownian motion, Cameron-Martin space,  stochastic differential equations, large deviation principles, Skorohod integral, stochastic Volterra equations, Laplace principle.

\vskip.2cm \noindent {\it 2010 Mathematics Subject Classification: } 60F10, 60H10, 60G22.
\end{abstract}

\section{Introduction}
The goal of this work is to study large deviation asymptotics for singularly perturbed ordinary differential equations with a multiplicative fractional Brownian noise. Specifically, we consider 
multidimensional stochastic differential equation (SDE) of the form
 \begin{equation}\label{eq-1-1}
X_t=x_0+\int_0^tb(s,X_s)ds+\sqrt{\veps}\int_0^t\sigma(s,X_s)dB_s^H,
\end{equation}
or equivalently
\begin{equation*}
X_t^{i}=x_0^i+\int_0^tb^i(s,X_s)ds+\sqrt{\veps}\sum_{j=1}^d\int_0^t\sigma^{i,j}(s,X_s)dB_s^{H,j},\, i=1,\dots,m,
\end{equation*}
where $x_0\in\mathbb{R}^m$, $\veps \in (0,\infty)$ is a small paramater, $b:[0,1]\times \RR^m \to \RR^m$,
$\sigma: [0,1]\times \RR^m \to \RR^{m\times d}$ are suitable coefficients, $B^H$ is a $d$-dimensional fractional Brownian motion with Hurst parameter $H\in(\frac{1}{2},1)$,
 and the integral with respect to $B^H$ is a pathwise Riemann-Stieltjes integral in the sense of \cite{Y}. The existence and uniqueness, and the properties of the solution to the above SDE have been studied by many authors (cf. \cite{MR1302388},  \cite{MR1382288}, \cite{MR1793494},   \cite{MR1906118}, \cite{MR1798553}, \cite{NR},  \cite{MR2397797}).

When $H= 1/2$, $B^H\equiv B$ is a  $d$-dimensional standard Brownian motion and for this case the large deviation properties of 
\eqref{eq-1-1} have been well studied starting from the seminal work of Freidlin and Wentzell \cite{MR722136} (cf. \cite{MR590626},  \cite{MR755154}, \cite{MR758258},  \cite{DE}). Since for $H\neq 1/2$ the fractional Brownian motion $B^H$ is not a semimartingale, the classical stochastic calculus methods are not readily applicable for the study of  stochastic integrals with respect to $B^H$. Nevertheless, fractional Brownian motion has a nice representation as  a Volterra type integral with respect to a standard Brownian motion (cf. \cite{DU}):
\begin{equation}\label{eq:bhbrep}
B_t^H= \int_0^1K_H(t,s)dB_s, \; t \in [0,1],
\end{equation}
where $B$ is a standard Brownian motion and $K_H:[0,1]\times [0,1] \to \RR$ is a suitable kernel (see \eqref{K-H}).
Several authors have studied stochastic Volterra equations of the form
\begin{equation}\label{vol-fbm}
X_t=x_0+\int_0^t K(t,s)b(s,X_s)ds+\int_0^tK(t,s)\sigma(s,X_s)dB_s,
\end{equation}
where the above stochastic integral is the usual It\^{o} integral and $K$ is some kernel. 
Stochastic Volterra equations with regular kernels and driven by a finite-dimensional Brownian motion were first investigated by Berger and Mizel \cite{MR581430} and since then such equations have been widely studied  under more general conditions, including settings where $K$ is the  fractional Brownian motion kernel $K_H$ (cf. \cite{MR781420}, \cite{MR1071815}, \cite{MR1886561}, \cite{MR1881594}, \cite{MR2422961}).

 Large deviations for small noise stochastic Volterra equations have also been studied in many works including,  \cite{MR1748725}, \cite{MR2020569}, \cite{MR2413840} and \cite{MR1978984}.  In particular, using Azencott's method (cf. \cite{MR590626}), Nualart and Rovira \cite{MR1748725} proved a large deviation principle (LDP) in the space of continuous functions and later Lakhel \cite{MR2020569} improved this result by establishing a LDP in a Besov-Orlicz space. The last twenty years have seen the emergence of a  method for the study of large deviation properties of stochastic dynamical systems driven by a Brownian motion (BM) that is based on techniques from the theory of weak convergence and stochastic control.  This method, which was originally introduced for the setting of finite dimensional SDE driven by BM in Bou\'{e} and  Dupuis \cite{MR1675051} and then later extended to stochastic dynamical systems with an infinite dimensional BM in Budhiraja and Dupuis \cite{BD} has now been used for a broad range of problems (see \cite{buddupbook} and references therein). 
 The starting point in this approach   is a variational formula for exponential functionals of a BM using which the problem of large deviations is reduced to proving certain weak convergence properties of controlled analogues of the underlying stochastic dynamical systems describing the state processes.  This approach was used in Zhang \cite{MR2413840} for studying LDP for small noise 
 stochastic Volterra equations with very general types of kernels  including the  fractional Brownian motion kernel $K_H$.

The SDE  \eqref{eq-1-1} considered in this work cannot in general be written as a stochastic Volterra equation of the form 
\eqref{vol-fbm}. Writing the kernel $K_H$ as $K_H(t,s) = k_H(t,s) \mathbf{1}_{[0,t]}(s)$ where $k_H$ is as in \eqref{eq:khts}
and using the representation \eqref{eq:bhbrep}, one can formally write the pathwise integral $\int_0^t\sigma(s,X_s)dB^H_s$ in \eqref{eq-1-1}
as
\begin{eqnarray*}
\int_0^t\sigma(s,X_s)dB^H_s&=&\int_0^t\sigma(s,X_s)k_H(t,s)dB_s+\int_0^t\int_s^t[\sigma(u,X_u)-\sigma(s,X_s)]\partial_uk_H(u,s)du\, \delta B_s\\
&&+\int_0^t\int_0^uD^B_s\sigma(u,X_u)\partial_u k_H(u,s)ds\,du\\
&=&\int_0^t\int_s^t\sigma(u,X_u)\partial_uk_H(u,s)\delta B_s+\int_0^t\int_0^uD^B_s\sigma(u,X_u)\partial_u k_H(u,s)ds\,du,
\end{eqnarray*}
where $dB$ and $\delta B$ denote the It\^{o} integral  and  the Skorohod integral respectively with respect to the Brownian motion $B$, and $D^B$ is the Malliavin derivative with respect to $B$. 
In particular, even though $B^H$ has a simple representation in terms of a standard Brownian motion $B$ and the fBm kernel $K_H$ (as given in \eqref{eq:bhbrep}) the stochastic integral on the left side of the above display does not admit a similar tractable representation. Thus, one cannot directly use the methods of \cite{MR1748725,MR2020569,MR2413840,MR1978984} to obtain a large deviation principle for the solution of \eqref{eq-1-1}.  It is therefore of interest to develop a general approach for studying large deviation results for functionals of a fractional Brownian motion, analogous to the one developed in
\cite{MR1675051, BD} for Brownian motion systems, that does not  make an explicit use of the representation  \eqref{eq:bhbrep}.

The starting point of our work is a nice extension of the variational representation for functionals of Brownian motions given in \cite{MR1675051, BD} to the general setting of an abstract Wiener space that was established in \cite{Zhang}. By explicitly identifying the abstract Wiener space and related constructions for a fractional Brownian motion (e.g. monotonic resolution 
 of the identity operator on the Cameron-Martin space,  a filtration on the associated abstract Wiener space, the Skorohod integral) we apply \cite[Theorem 3.2]{Zhang} to give  a convenient representation for functionals of a fractional Brownian motion that is analogous to Brownian motion representations in \cite{MR1675051, BD} (see  Proposition \ref{representation}).
 Next, by adapting ideas from \cite{MR1675051, BD}, this representation is used to give a general sufficient condition for a LDP to hold for functionals of a fractional Brownian motion. This condition is analogous to sufficient conditions given for Brownioan noise driven systems in \cite{BD, MR2435853} and for systems with a Poisson noise in \cite{BuDuMA2} which have found broad applicability (see \cite{buddupbook} and references therein). With this sufficient condition, the proof of a LDP for the solution of \eqref{eq-1-1},
 becomes very similar to proofs for SDE driven by BM. Once again the key step is to verify certain weak convergence properties of controlled analogues of the original stochastic differential equation. The control processes this time take a different form and  are given as suitable random non-anticipative elements 
 of the Cameron-Martin space associated with the fractional Brownian motion (see the definition of classes $\cla$ and $\cla_b$
 in Section \ref{sec:varrep}). The verification of these weak convergence properties uses various results for 
 the SDE  \eqref{eq-1-1} that have been developed in \cite{NR} and a key estimate on the H\"{o}lder norms of the solutions in terms of those of the driving paths (see Proposition \ref{existence-uniqueness}).

The paper is organized  as follows. Section 2 introduces some notation and background results. Section 3 gives our main assumptions 
and states the main results.  Proofs of these results are provided in Section 4.

\section{Notation and Preliminaries }
In this section we recall the definition of a fractional Brownian motion and introduce 
some related terminology  and notation.
\subsection{Fractional integrals and derivatives}
Let $a,b\in \mathbb{R}$ with $a<b$. Denote by $L^p([a,b]),\,p\geq1$, the usual space of Lebesgue 
measurable functions $f:\,[a,b]\rightarrow\mathbb{R}$ for which $\Vert f\Vert_{L^p}<\infty$, where
\begin{equation*}
\Vert f\Vert_{L^p}=
\begin{cases}
\left(\int_a^b\vert f(t)\vert^p dt\right)^{1/p},\,\text{if $1\leq p<\infty$}\\
\text{ess} \sup\{\vert f(t)\vert:\,t\in[a,b]\},\,\text{if $p=\infty$}. 
\end{cases}
\end{equation*}
Let $f\in L^{1}\left( [a,b]\right) $ and $\alpha >0.$ The left-sided and right-sided fractional Riemann-Liouville
integrals of $f$ of order $\alpha $ are defined for almost all $t\in \left(
a,b\right) $ by
\[
I_{a+}^{\alpha }f\left( t\right) =\frac{1}{\Gamma \left( \alpha \right) }%
\int_{a}^{t}\left( t-s\right) ^{\alpha -1}f\left( s\right) ds
\]%
and
\[
I_{b-}^{\alpha }f\left( t\right) =\frac{\left( -1\right) ^{-\alpha }}{\Gamma
\left( \alpha \right) }\int_{t}^{b}\left( s-t\right) ^{\alpha -1}f\left(
s\right) ds,
\]%
respectively, where $\left( -1\right) ^{-\alpha }=e^{-i\pi \alpha }$ and $%
\Gamma \left( \alpha \right) =\int_{0}^{\infty }r^{\alpha -1}e^{-r}dr$ is
the Euler gamma function. Let $I_{a+}^{\alpha }(L^{p}([a,b]))$ (resp. $%
I_{b-}^{\alpha }(L^{p}([a,b]))$) be the image of $L^{p}([a,b])$ under the operator $%
I_{a+}^{\alpha }$ (resp. $I_{b-}^{\alpha }$). If $f\in I_{a+}^{\alpha
}\left( L^{p}([a,b])\right) \ $ (resp. $f\in I_{b-}^{\alpha }\left( L^{p}([a,b])\right) $)
and $0<\alpha <1$ then the Weyl derivatives are defined as
\begin{equation} \label{1.1}
D_{a+}^{\alpha }f\left( t\right) =\frac{1}{\Gamma \left( 1-\alpha \right) }%
\left( \frac{f\left( t\right) }{\left( t-a\right) ^{\alpha }}+\alpha
\int_{a}^{t}\frac{f\left( t\right) -f\left( s\right) }{\left( t-s\right)
^{\alpha +1}}ds\right) 1_{(a,b)}(t) 
\end{equation}%
and
\begin{equation} \label{1.2}
D_{b-}^{\alpha }f\left( t\right) =\frac{\left( -1\right) ^{\alpha }}{\Gamma
\left( 1-\alpha \right) }\left( \frac{f\left( t\right) }{\left( b-t\right)
^{\alpha }}+\alpha \int_{t}^{b}\frac{f\left( t\right) -f\left( s\right) }{%
\left( s-t\right) ^{\alpha +1}}ds\right) 1_{(a,b)}(t)   
\end{equation}%
for almost all $t\in(a,b)$ (the convergence of the integrals at the singularity $%
s=t$ holds pointwise for almost all $t\in \left( a,b\right) $ if $p=1$ and
 in $L^{p}$ if $1<p<\infty $).

\bigskip
Denote by $C([a,b]:\mathbb{R}^d)$ the space of $\mathbb{R}^d$-valued  continuous functions on the interval $[a,b]$. Note that $C([a,b]:\mathbb{R}^d)$ is a separable Banach space equipped with the following norm
\[
\Vert f\Vert_{a,b,\infty }=\sup_{a\leq r\leq b}|f(r)|.
\]%
For any $\lambda \in (0,1)$, we denote by $C^{\lambda }([a,b]:\mathbb{R}^d)$ the space of $\mathbb{R}^d$-valued $%
\lambda $-H\"{o}lder continuous functions on the interval $[a,b]$. We will
make use of the notation%
\[
\left\| f\right\| _{a,b,\lambda }=\sup_{a\leq \theta <r\leq b}\frac{%
|f(r)-f(\theta )|}{|r-\theta |^{\lambda }},
\]%
if $f\in C^{\lambda }([a,b]:\mathbb{R}^d)$.
In the case of $d=1$, we abbreviate $C^\lambda([a,b]:\mathbb{R})$ as  $C^{\lambda }([a,b])$.

Suppose that $f\in C^{\lambda }([a,b])$ and $g\in C^{\mu }([a,b])$ with $\lambda
+\mu >1$. Then, from the classical paper by Young \cite{Y}, the
Riemann-Stieltjes integral $\int_{a}^{b}fdg$ exists. The following
proposition can be regarded as a fractional integration by parts formula,
and provides an explicit expression for the integral $\int_{a}^{b}fdg$ in
terms of fractional derivatives (cf. \cite{Z}).

\begin{proposition}
\label{p1} Suppose that $f\in C^{\lambda }([a,b])$ and $g\in C^{\mu }([a,b])$
with $\lambda +\mu >1$. Let $0<\alpha<1$, ${\lambda }>\alpha $ and $\mu >1-\alpha $. Then
the Riemann-Stieltjes integral $\int_{a}^{b}fdg$ exists and it can be
expressed as%
\begin{equation}
\int_{a}^{b}fdg=(-1)^{\alpha }\int_{a}^{b}D_{a+}^{\alpha }f\left( t\right)
D_{b-}^{1-\alpha }g_{b-}\left( t\right) dt,  \label{1.8}
\end{equation}%
where $g_{b-}\left( t\right) =g\left( t\right) -g\left( b\right) $.
\end{proposition}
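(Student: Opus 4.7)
The plan is to establish the identity in three stages.

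First, I would verify that the integrand on the right-hand side lies in $L^1([a,b])$. Decomposing $f = f(a) + \tilde{f}$ with $\tilde{f}(t) := f(t) - f(a)$, the contribution $f(a) D_{a+}^\alpha \mathbf{1}(t) = f(a)[\Gamma(1-\alpha)]^{-1}(t-a)^{-\alpha}$ is integrable since $\alpha < 1$. For $\tilde{f}$, the H\"older bound $|\tilde{f}(t) - \tilde{f}(s)| \leq \|f\|_{a,b,\lambda}|t-s|^\lambda$ together with $\tilde{f}(a) = 0$ yields
\[
|D_{a+}^\alpha \tilde{f}(t)| \leq C\|f\|_{a,b,\lambda}(t-a)^{\lambda-\alpha},
\]
which is bounded on $[a,b]$ since $\lambda > \alpha$. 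An analogous argument using $g_{b-}(b) = 0$ and $\mu + \alpha > 1$ shows that $D_{b-}^{1-\alpha} g_{b-}$ is bounded on $[a,b]$. Hence the product belongs to $L^1([a,b])$.

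Second, I would establish \eqref{1.8} when $f, g \in C^1([a,b])$. In this smooth case the Young integral agrees with $\int_a^b f(t) g'(t)\, dt$. Writing $g_{b-}(t) = -\int_t^b g'(s)\, ds$ and using the composition rule $D_{b-}^{1-\alpha}(-I_{b-}^{1} g') = -I_{b-}^{\alpha} g'$, together with the classical fractional integration by parts identity
\[
\int_a^b (D_{a+}^\alpha \phi)(t)\, \psi(t)\, dt = \int_a^b \phi(t)\, (D_{b-}^\alpha \psi)(t)\, dt
\]
applied with $\phi = f$, $\psi = I_{b-}^\alpha g'$, and the cancellation $D_{b-}^\alpha I_{b-}^\alpha g' = g'$, the right-hand side of \eqref{1.8} reduces to $\int_a^b f(t) g'(t)\, dt$. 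The complex prefactor $(-1)^\alpha$ in \eqref{1.8} combines with the sign conventions in \eqref{1.1}--\eqref{1.2} to produce a real-valued identity.

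Third, I would pass from smooth to general H\"older $f, g$ by approximation. Taking mollifications $f_n, g_n \in C^1([a,b])$ with $f_n \to f$ and $g_n \to g$ uniformly on $[a,b]$, and with $\sup_n \|f_n\|_{a,b,\lambda} < \infty$ and $\sup_n \|g_n\|_{a,b,\mu} < \infty$, I would invoke the continuity of the Young integral on $C^{\lambda'} \times C^{\mu'}$ for any $\lambda' < \lambda$, $\mu' < \mu$ with $\lambda' + \mu' > 1$ to obtain $\int_a^b f_n\, dg_n \to \int_a^b f\, dg$. For the right-hand side, dominated convergence applied inside the defining integrals \eqref{1.1}--\eqref{1.2} gives pointwise a.e.\ convergence of $D_{a+}^\alpha f_n$ and $D_{b-}^{1-\alpha}(g_n)_{b-}$ to their limits, while the uniform H\"older bounds supply an integrable majorant for the product. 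A final application of dominated convergence completes the passage to the limit.

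The main obstacle will be the approximation step: one must choose the mollifications so as to control the H\"older seminorms uniformly in $n$, so that the pointwise bounds from the first step transfer to an integrable majorant adequate for dominated convergence on the right-hand side. A related bookkeeping point is verifying that the complex $(-1)^\alpha$ factors appearing in the Weyl derivatives and the prefactor of \eqref{1.8} combine consistently to yield a real identity matching the Riemann-Stieltjes integral.
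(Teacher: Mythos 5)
The paper does not actually prove Proposition \ref{p1}: it is stated as a known result and attributed to Z\"ahle \cite{Z}, so there is no in-paper argument to compare yours against. Your outline is essentially the standard proof of this identity and is sound: the integrability estimates in your first step are correct (the decomposition $f=f(a)+\tilde f$ handles the $(t-a)^{-\alpha}$ singularity, and $\lambda>\alpha$, $\mu>1-\alpha$ give $|D_{a+}^{\alpha}\tilde f(t)|\leq C\Vert f\Vert_{a,b,\lambda}(t-a)^{\lambda-\alpha}$ and boundedness of $D_{b-}^{1-\alpha}g_{b-}$); the smooth case reduces correctly via $g_{b-}=I_{b-}^{1}g'$, the semigroup property $D_{b-}^{1-\alpha}I_{b-}^{1}g'=I_{b-}^{\alpha}g'$, and the duality $\int_a^b(D_{a+}^{\alpha}\phi)\psi=\int_a^b\phi(D_{b-}^{\alpha}\psi)$; and the approximation step works because mollification (after a H\"older-preserving extension of $f,g$ beyond $[a,b]$, a point worth stating explicitly) does not increase the H\"older seminorms, the Young integral is continuous on $C^{\lambda'}\times C^{\mu'}$ with $\lambda'+\mu'>1$ by interpolating uniform convergence against the uniform H\"older bounds, and your step-one estimates provide the integrable majorant for dominated convergence on the right-hand side. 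Two small points to tighten: you should record that the hypotheses place $f\in I_{a+}^{\alpha}(L^{p})$ and $g_{b-}\in I_{b-}^{1-\alpha}(L^{p})$ (so that the Weyl derivative formulas \eqref{1.1}--\eqref{1.2} are legitimately the inverses of the fractional integrals, which is what the duality formula requires), and the $(-1)^{\pm\alpha}$ bookkeeping you flag does close up, since the prefactor $(-1)^{\alpha}$ in \eqref{1.8} cancels the $(-1)^{-\alpha}$ carried by $I_{b-}^{\alpha}$.
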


When we consider the interval $[a,b]=[0,1]$, we will use the following simplified notations
\[
\Vert f\Vert_{\infty }=\Vert f\Vert_{0,1,\infty }, \;\; \Vert f\Vert_{\lambda}=\Vert f\Vert_{0,1,\lambda}
\]
for  $f$ in $C([0,1]:\RR^d)$ and 
 $C^\lambda([0,1]:\RR^d)$, respectively.

For $0<\alpha<1$, let $W_0^{\alpha,\infty}([0,1]:\mathbb{R}^d)$ be the space of $\mathbb{R}^d$-valued measurable functions $f:\,[0,1]\rightarrow \mathbb{R}^d$ such that 
\[
\Vert f\Vert_{\alpha,\infty}=\sup_{0\leq t\leq 1} \left\{|f(t)|+\int_0^t\frac{|f(t)-f(s)|}{(t-s)^{\alpha+1}}ds\right\}<\infty.
\]
It is easy to see that 
\[
 C^{\alpha+\varepsilon}([0,1]:\RR^d)\subseteq W_0^{\alpha, \infty}([0,1]:\mathbb{R}^d)
\]
for any $\varepsilon>0$.  On the other hand, by the Garsia-Rademich-Rumsey inequality (see \cite{MR267632}), it follows that
\[
 W_0^{\alpha, \infty}([0,1]:\mathbb{R}^d)\subseteq C^{\alpha-\varepsilon}([0,1]:\RR^d)
 \]
 for any $0<\varepsilon<\alpha$.\\

\subsection{Fractional Brownian motion}

We now recall the definition of a fractional Brownian motion.

 \begin{definition}
	 \label{def:fbm}
For  $H\in(0,1)$, a $d$-dimensional fractional Brownian motion (fBm for short) $B^H=\{B^H_t:\,t\in[0,1]\}$ with Hurst parameter $H$ defined on a complete probability space $(\Omega, \mathcal{F}, \mathbb{P})$ is a centered Gaussian process whose covariance matrix $R_H=(R_H^{i,j})_{1\leq i,\,j\leq d}$ is given by 
\begin{equation}\label{covariance}
R_H^{i,j}(s, t)=\mathbb{E}(B^{H,i}_sB^{H,j}_t)=\frac{1}{2}(s^{2H}+t^{2H}-|t-s|^{2H})\delta_{i,j},
\; s,\,t\in[0,1],
\end{equation}
 where $\delta$ is the Kronecker delta function.
\end{definition}

For $H=\frac{1}{2}$, the process $B^{\frac{1}{2}}$ is a $d$-dimensional standard Brownian motion. For $H\neq\frac{1}{2}$, the fBm $B^H$ is not a semimartingale. 
It follows from (\ref{covariance}) that
\begin{equation*}
\mathbb{E}(\vert B_t^H-B_s^H\vert^2)=d\vert t-s\vert^{2H},\; t,s \in [0,1].
\end{equation*}
From the above property along with Kolmogorov's continuity criterion it follows that the sample paths of $B^H$ are a.s. H\"{o}lder continuous of order $\beta$ for all $\beta<H$.

In the sequel we consider the canonical probability space $(\Omega,\mathcal{F},\mathbb{P})$, where $\Omega=C_0([0,1]:\mathbb{R}^d)$ is the space of continuous functions null at time $0$, $\mathcal{F}=\mathcal{B}(C_0([0,1]:\,\mathbb{R}^d))$ is the Borel $\sigma$-algebra and $\mathbb{P}$ is the unique $d$-dimensional probability measure such that the  canonical process $B^H=\{B^H_t(\omega)=\omega(t):\,t\in[0,1]\}$ is a $d$-dimensional fractional Brownian motion with Hurst parameter $H$. Consider the canonical filtration given by $\{\mathcal{F}^H_t:\,t\in[0,1]\}$, where $\mathcal{F}^H_t =\sigma\{B^H_s:\,0\leq s\leq t\}\vee \mathcal{N}$ and $\mathcal{N}$ is the set of the $\mathbb{P}$-negligible events.

 Let $F(a,b,c;z)$ denote the Gauss hypergeometric function defined for any $a,\,b,\,c,\, z\in\mathbb{C}$ with $|z|<1$ and $c\neq 0,-1,-2,\dots$ by
\[
F(a,b,c;z)=\sum_{k=0}^{\infty}\frac{(a)_k(b)_k}{(c)_k}z^k,
\]
where $(a)_0=1$ and $(a)_k=a(a+1)\dots(a+k-1)$ is the Pochhammer symbol.

Let $B=\{B_t=(B_t^1,\dots,B_t^d),\,t\in[0,1]\}$ be a standard $d$-dimensional Brownian motion.
Then from \cite{DU} we have that the process
\begin{equation}\label{bm-2-fbm}
B_t^H= \int_0^1K_H(t,s)dB_s, \; t \in [0,1]
\end{equation}
defines a fBm with Hurst parameter $H$, where 
  \begin{equation}\label{K-H}
K_H(t,s)= k_H(t,s) \mathbf{1}_{[0,t]}(s),
\end{equation}
for $0\le s\le t$
\begin{equation}\label{eq:khts}
k_H(t,s) = \frac{c_H}{\Gamma\left(H+\frac{1}{2}\right)}(t-s)^{H-\frac{1}{2}}F\left(H-\frac{1}{2},\frac{1}{2}-H,H+\frac{1}{2};1-\frac{t}{s}\right),
\end{equation}
$c_H=\left[\frac{2H\Gamma\left(\frac{3}{2}-H\right)\Gamma\left(H+\frac{1}{2}\right)}{\Gamma(2-2H)}\right]^{1/2}$,
 and $\Gamma(\cdot)$ as before is the gamma function. \newline

The next lemma (cf. Theorem 2.1 in \cite{DU} and (10.22) in \cite{SKM}) will play an important role in the construction of the Cameron-Martin space in the abstract Wiener space associated with a fBm.

\begin{lemma}\label{int-trans}
For $H\in(0,1)$, consider the integral transform
\begin{eqnarray}\label{K-H-mapping}
(K_Hf)(t)&=&\int_0^1K_H(t,s)f(s)ds\nonumber\\
&=&\frac{c_H}{\Gamma(H+\frac{1}{2})}\int_0^1 (t-s)^{H-\frac{1}{2}}F\left(\frac{1}{2}-H, H-\frac{1}{2}, H+\frac{1}{2};1-\frac{t}{s}\right) f(s)ds.\nonumber\\
\end{eqnarray}
Then $K_H$ is an isomorphism from $L^2([0,1]:\RR^d)$ onto $I_{0+}^{H+\frac{1}{2}}(L^2([0,1]:\RR^d))$ and
\begin{eqnarray}
K_Hf(t) &=&c_H I^1_{0+}\left(\psi \left(I^{H-\frac{1}{2}}_{0+}(\psi^{-1} f)\right)\right)(t),\, \hbox{if}\ H\geq \frac{1}{2},\; t \in [0,1]\label{H-big}\\
K_Hf(t) &=&c_H I^{2H}_{0+}\left(\psi^{-1} \left(I^{\frac{1}{2}-H}_{0+}(\psi f)\right)\right)(t),\, \hbox{if}\ H\leq \frac{1}{2}, \; t \in [0,1].
\end{eqnarray}
where $\psi(u) = u^{H-\frac{1}{2}}$ for $u \in [0,1]$.
\end{lemma}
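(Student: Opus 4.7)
The lemma is classical and I sketch its proof in two stages: first the integral-transform composition formulas, then the isomorphism claim. Since $K_H$ acts componentwise on vector-valued functions, it suffices to work in the scalar case.

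For $H \geq \frac{1}{2}$, I would verify (H-big) by unfolding the right side. With $\psi(u) = u^{H-\frac{1}{2}}$, one has
$$\psi(u)\, I_{0+}^{H-1/2}(\psi^{-1}f)(u) = \frac{u^{H-\frac{1}{2}}}{\Gamma(H-\frac{1}{2})} \int_0^u (u-r)^{H-\frac{3}{2}} r^{\frac{1}{2}-H} f(r)\, dr,$$
and then applying $c_H\, I_{0+}^{1}$ and exchanging the order of integration via Fubini reduces the matter to the pointwise identity
$$\frac{1}{\Gamma(H-\frac{1}{2})} \int_r^t u^{H-\frac{1}{2}}(u-r)^{H-\frac{3}{2}}\, du = \frac{r^{H-\frac{1}{2}}(t-r)^{H-\frac{1}{2}}}{\Gamma(H+\frac{1}{2})}\, F\!\left(H-\tfrac{1}{2},\, \tfrac{1}{2}-H,\, H+\tfrac{1}{2};\, 1-\tfrac{t}{r}\right).$$
This identity is established by a change of variable of the form $u = r + (t-r)v$ (or a similar normalization), which recasts the integral as an Euler integral of the first kind for the Gauss hypergeometric function, possibly after invoking an Euler transformation of $F$. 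A parallel chain of manipulations verifies the formula for $H \leq \frac{1}{2}$, starting instead from the factorization with $I_{0+}^{2H}$ and $I_{0+}^{1/2-H}$ and using the same Fubini-plus-substitution scheme.

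For the isomorphism claim, I would exploit the factorization
$$K_H = c_H\, I_{0+}^{1}\circ M_\psi\circ I_{0+}^{H-1/2}\circ M_{\psi^{-1}} \quad (H\ge \tfrac{1}{2}),$$
where $M_h$ denotes multiplication by $h$, and the analogous factorization for $H\leq \frac{1}{2}$. By the classical mapping properties of Riemann-Liouville operators recorded in \cite{SKM}, each $I_{0+}^{\alpha}$ is a bounded bijection from $L^2([0,1])$ onto its image $I_{0+}^{\alpha}(L^2([0,1]))$, with inverse given by the Weyl derivative $D_{0+}^{\alpha}$ from (1.1). The multiplication operators $M_\psi$ and $M_{\psi^{-1}}$ are bounded between appropriate weighted $L^2$ spaces that intertwine the fractional integrals, which, combined with the semigroup identity $I_{0+}^{1}\circ I_{0+}^{H-1/2} = I_{0+}^{H+1/2}$ (absorbing the intervening weights), shows that $K_H$ maps $L^2([0,1]:\mathbb{R}^d)$ bijectively onto $I_{0+}^{H+1/2}(L^2([0,1]:\mathbb{R}^d))$. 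The inverse has the explicit form
$$K_H^{-1} = c_H^{-1}\, M_\psi\circ D_{0+}^{H-1/2}\circ M_{\psi^{-1}}\circ D_{0+}^{1}.$$

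The main obstacle is the hypergeometric identity required to match the factorization with the kernel $k_H$. After the substitution the inner $u$-integral becomes a beta-type integral weighted by an algebraic factor, and the resulting expression must be identified with the Euler integral representation of $F$; this typically requires one of the Euler transformations such as $F(a,b,c;z) = (1-z)^{c-a-b} F(c-a, c-b, c; z)$ in order to bring the parameters into the specific form appearing in the statement. Once this identity is secured, the operator-theoretic portion of the argument reduces to routine manipulations of Riemann-Liouville images and their inverses in $L^2$.
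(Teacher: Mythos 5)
The paper does not actually prove this lemma: it is quoted with the citation ``(cf.\ Theorem 2.1 in [DU] and (10.22) in [SKM])'', so there is no in-paper argument to compare against. Your sketch reconstructs the standard proof from those references, and its core computation is correct: unfolding the right-hand side of \eqref{H-big}, applying Fubini (justified since $(u-r)^{H-\frac32}$ is integrable near $u=r$ and $r^{\frac12-H}f(r)\in L^1$ by Cauchy--Schwarz), and substituting $u=r+(t-r)v$ reduces everything to
\begin{equation*}
\int_0^1 v^{H-\frac32}\bigl(1-zv\bigr)^{H-\frac12}\,dv
=\frac{\Gamma\bigl(H-\tfrac12\bigr)}{\Gamma\bigl(H+\tfrac12\bigr)}\,
F\Bigl(\tfrac12-H,\,H-\tfrac12,\,H+\tfrac12;\,z\Bigr),\qquad z=1-\tfrac{t}{r}\le 0,
\end{equation*}
which is exactly Euler's integral representation with $b=H-\tfrac12$, $c=H+\tfrac12$, $a=\tfrac12-H$ (so $c-b-1=0$); no Euler transformation is needed, contrary to your hedge, only the symmetry $F(a,b,c;z)=F(b,a,c;z)$. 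The one place where your argument is genuinely thinner than what [SKM] supplies is the isomorphism claim: the phrase ``absorbing the intervening weights'' into the semigroup identity $I_{0+}^{1}\circ I_{0+}^{H-\frac12}=I_{0+}^{H+\frac12}$ is not a routine manipulation, because $M_{\psi^{-1}}$ does not map $L^2$ to $L^2$ and the weights do not commute with the fractional integrals. The identification $c_H\,I_{0+}^{1}\bigl(\psi\,I_{0+}^{H-\frac12}(\psi^{-1}L^2)\bigr)=I_{0+}^{H+\frac12}(L^2)$ is precisely the content of the weighted mapping theorems in [SKM, \S10] (the source of (10.22)), and your proof should invoke them explicitly rather than present the range identification as routine. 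With that reference made precise, the factorization, the explicit inverse $K_H^{-1}=c_H^{-1}M_\psi D_{0+}^{H-\frac12}M_{\psi^{-1}}D_{0+}^{1}$, and the bijectivity all follow as you indicate.
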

\begin{remark}\label{rem-m-1}
For any $a<b$, if $\alpha >\frac{1}{p}$,  then from Theorem 3.6 in \cite{SKM} we have
\[
I_{a+}^{\alpha }\left( L^{p}([a,b])\right) \,\cup \,I_{b-}^{\alpha }\left(
L^{p}([a,b])\right) \subset C^{\alpha -\frac{1}{p}}\left( [a,b]\right) .
\]
In particular, we have $I_{0+}^{H+\frac{1}{2}}(L^2([0,1]:\RR^d))\subset C^H([0,1]:\RR^d)$. Note that $(K_Hf)(0)=0$ for all $f\in L^2([0,1]:\RR^d)$.
Thus $K_H$ maps $L^2([0,1]:\RR^d)$ into $C_0([0,1]:  \mathbb{R}^d)$ and for $f_1, f_2 \in L^2([0,1]:\RR^d)$, $f_1= f_2$ a.e.
if and only if $K_Hf_1 = K_Hf_2$.
\end{remark}
\bigskip

Define $\mathcal{H}_H=\{(K_H\dot{h}^1,\dots,K_H\dot{h}^d):\,\dot{h}=(\dot{h}^1,\dots,\dot{h}^d)\in L^2([0,1]:\RR^d)\}$, that is, any $h\in\mathcal{H}_H$ can be represented as
\[
h(t)=(K_H\dot{h})(t)=\int_0^1K_H(t,s)\dot{h}(s)ds,
\]
for some $\dot{h}\in L^2([0,1]:\RR^d)$. Define a scalar inner product on $\mathcal{H}_H$ by 
\[
\langle h, g\rangle_{\mathcal{H}_H}=\langle K_H\dot{h}, K_H\dot{g}\rangle_{\mathcal{H}_H}=\langle \dot{h}, \dot{g}\rangle_{L^2}.
\]

Then $\mathcal{H}_H$ is a separable Hilbert space with the inner product $\langle \cdot,\cdot\rangle_{\mathcal{H}_H}$. From Remark \ref{rem-m-1}, we  see that $\mathcal{H}_H$ is a subset of $ \Omega=C_0([0,1]:  \mathbb{R}^d)$.

\section{Results}
We now present the main results of this work. The proofs will be provided in Section \ref{sec-4}.
\subsection{A variational representation for functionals of fBm}
\label{sec:varrep}
For  $0<N<\infty$, let $S_N=\{v\in\mathcal{H}_H: \,\frac{1}{2}\Vert v\Vert^2_{\mathcal{H}_H}\leq N\}$. 
Equipped with the weak topology on $\mathcal{H}_H$, $S_N$ can be metrized as a compact Polish space.
Let $\mathcal{A}$ denote  the class of all  $\mathcal{H}_H$-valued random variables $v$ in $L^2((\Omega,\mathcal{F},\mathbb{P}):\mathcal{H}_H)$ such that $v(t)$ is
$\{\mathcal{F}^H_t\}$-measurable for every $t \in [0,1]$. The set of all a.s. bounded elements in $\mathcal{A}$ is denoted by $\mathcal{A}_b$, that is,
\[
\mathcal{A}_b=\{v\in\mathcal{A}: \,\Vert v(\omega)\Vert_{\mathcal{H}_H}\leq N\,\, \mathbb{P}\text{-a.s. for some}\,\, 0<N<\infty\}.
\]

The following variational representation is obtained by making use of \cite[Theorem 3.2]{Zhang}.
\begin{proposition}\label{representation}
Let $f$ be a bounded Borel measurable function on $\Omega$. Then we have
\begin{eqnarray*}
-\log\mathbb{E}(e^{-f(\omega)})&=&\inf_{v\in\mathcal{A}}\mathbb{E}\left(f(\omega+v)+\frac{1}{2}\Vert v\Vert^2_{\mathcal{H}_H}\right)\\
&=&\inf_{v\in\mathcal{A}_b}\mathbb{E}\left(f(\omega+v)+\frac{1}{2}\Vert v\Vert^2_{\mathcal{H}_H}\right).
\end{eqnarray*}
\end{proposition}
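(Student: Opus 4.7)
The plan is to invoke the abstract-Wiener-space variational formula of Zhang, namely \cite[Theorem 3.2]{Zhang}, after identifying the appropriate geometric data for fractional Brownian motion. The inputs required are: (i) an abstract Wiener space $(\Omega,\mathcal{H},\mathbb{P})$, (ii) a monotonic resolution of the identity $\{\pi_t\}_{t\in[0,1]}$ on $\mathcal{H}$, and (iii) the filtration on $\Omega$ canonically associated with $\{\pi_t\}$ that defines the class of non-anticipative controls. For (i), $(\Omega,\mathcal{H}_H,\mathbb{P})$ with $\Omega=C_0([0,1]:\mathbb{R}^d)$, $\mathbb{P}$ the law of $B^H$, and $\mathcal{H}_H$ as constructed in Subsection 2.2 is an abstract Wiener space, the continuous embedding $\mathcal{H}_H\hookrightarrow\Omega$ being provided by Remark \ref{rem-m-1}.

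For (ii), I would transport the canonical $L^2$ resolution through the isometric isomorphism $K_H:L^2([0,1]:\mathbb{R}^d)\to\mathcal{H}_H$ of Lemma \ref{int-trans}: for each $t\in[0,1]$ set
\[
\pi_t h := K_H\bigl(\dot h\,\mathbf{1}_{[0,t]}\bigr),\qquad h=K_H\dot h\in\mathcal{H}_H.
\]
Since multiplication by $\mathbf{1}_{[0,t]}$ is an orthogonal projection on $L^2$ and $K_H$ is an isometry, each $\pi_t$ is an orthogonal projection on $\mathcal{H}_H$, and $\{\pi_t\}$ is non-decreasing and strongly continuous in $t$ with $\pi_0=0$, $\pi_1=I$, and $\pi_s\pi_t=\pi_{s\wedge t}$. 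These are precisely the properties of a monotonic resolution of the identity in Zhang's sense.

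For (iii), I would verify that the filtration $\{\mathcal{G}_t\}$ associated with $\{\pi_t\}$ in Zhang's framework coincides with $\{\mathcal{F}_t^H\}$, so that Zhang's non-anticipative class matches $\mathcal{A}$. The inclusion $\mathcal{G}_t\subset\mathcal{F}_t^H$ follows from the Volterra property of $K_H$: $(\pi_t h)(s)=h(s)$ for $s\le t$ since $k_H(s,u)=0$ for $u>s$. For the reverse inclusion I would use the causal invertibility of $K_H$ encoded in \eqref{H-big}, where the factorization as a composition of the forward Volterra operators $I_{0+}^{1}$, $I_{0+}^{H-1/2}$ and multiplications by $\psi,\psi^{-1}$ shows that $\dot h|_{[0,t]}$ is a measurable function of $h|_{[0,t]}$. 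An immediate equivalent consequence is that the conditions "$\pi_t v$ is $\mathcal{F}_t^H$-measurable for every $t$" and "$v(t)$ is $\mathcal{F}_t^H$-measurable for every $t$" coincide, so that the admissible class in \cite[Theorem 3.2]{Zhang} is exactly $\mathcal{A}$.

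With the identification complete, \cite[Theorem 3.2]{Zhang} delivers the first equality. For the second, $\mathcal{A}_b\subset\mathcal{A}$ yields $\ge$; for $\le$, given any $v\in\mathcal{A}$ with finite cost I would truncate by $v^N=v\,\mathbf{1}_{\{\|v\|_{\mathcal{H}_H}\le N\}}\in\mathcal{A}_b$ and let $N\to\infty$ using boundedness of $f$, dominated convergence for the quadratic cost, and a standard limiting argument in the spirit of \cite{BD,MR1675051}. The main obstacle I foresee is the filtration identification $\mathcal{G}_t=\mathcal{F}_t^H$, which rests on the causal invertibility of $K_H$ and requires care about the $L^p$ spaces in which the fractional operators in \eqref{H-big} are composed; once that is in hand, the rest is a routine translation of Zhang's abstract formula to the fBm setting.
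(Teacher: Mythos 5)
Your overall route is the same as the paper's: identify $(i_H,\mathcal{H}_H,\Omega)$ as the abstract Wiener space, take exactly the resolution of the identity $\pi_t^H(K_H\dot h)=K_H(\dot h\mathbf{1}_{[0,t]})$, match the associated filtration with $\{\mathcal{F}_t^H\}$ (the paper outsources this to \cite[Proposition 4.4, Theorems 4.3 and 4.8]{DU}, i.e.\ Lemma \ref{filtration}, rather than re-deriving it from the Volterra structure and \eqref{H-big} as you sketch, but the content is the same), and then quote \cite[Theorem 3.2]{Zhang}. That part of your proposal is correct.

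The one genuine flaw is your argument for the second equality. The truncation $v^N=v\,\mathbf{1}_{\{\Vert v\Vert_{\mathcal{H}_H}\le N\}}$ is not admissible: the event $\{\Vert v\Vert_{\mathcal{H}_H}\le N\}$ depends on the entire path of $\dot v$ and is only $\mathcal{F}_1^H$-measurable, so $v^N(t)$ fails to be $\mathcal{F}_t^H$-measurable and $v^N\notin\mathcal{A}$, let alone $\mathcal{A}_b$. The adapted truncation one must use is the stopping-time version $\dot v^N=\dot v\,\mathbf{1}_{[0,\tau_N]}$ with $\tau_N=\inf\{t:\frac12\int_0^t|\dot v(s)|^2ds\ge N\}\wedge 1$, which is exactly what the paper employs in the lower-bound step of the proof of Theorem \ref{general-LP}. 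In the proof of Proposition \ref{representation} itself no such argument is needed at all, because \cite[Theorem 3.2]{Zhang} already states the representation with the infimum over both the full non-anticipative class and its bounded subclass ($\mathcal{H}^a$ and $\mathcal{H}^a_b$ there), so both equalities come for free once the identification of the classes $\mathcal{A}$, $\mathcal{A}_b$ with $\mathcal{H}^a$, $\mathcal{H}^a_b$ is made.
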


\subsection{A general large deviation principle}
We begin by recalling the definition of a large deviation principle.
Let $\mathcal{E}$ be a Polish space (a complete separable metric space)
and let $\{X^\varepsilon:\, \varepsilon\in (0,1)\}$ be a collection of $\cle$-valued random variables.  \begin{definition}
	\begin{enumerate}[(a)]
\item A function $I:\,\mathcal{E}\to [0,\infty]$ is called a rate function on $\mathcal{E}$, if for each $M<\infty$ the level set $\{x\in\mathcal{E}:\,I(x)\leq M\}$ is a compact subset of $\mathcal{E}$. For $A\in\mathcal{B}(\mathcal{E})$, we define $I(A)=\inf_{x\in A}I(x)$.

\item Let $I$ be a rate function on $\mathcal{E}$. A collection $\{X^\varepsilon:\, \varepsilon\in (0,1)\}$ of $\cle$-valued random variables  is said to satisfy a large deviation principle in $\mathcal{E}$, as $\veps \to 0$, with rate function $I$ if the following two conditions hold:
\begin{enumerate}[(i)]
\item Large deviation upper bound. For each open set $G$ in $\mathcal{E}$,
\[
\limsup_{\varepsilon\to0} -\varepsilon\log{\mathbb{P}(X^\varepsilon\in G)}\leq I(G).
\]
\item Large deviation lower bound. For each closed set $F$ in $\mathcal{E}$,
\[
\liminf_{\varepsilon\to0}- \varepsilon\log{\mathbb{P}(X^\varepsilon\in F)}\geq  I(F).
\]
\end{enumerate}

\end{enumerate}
\end{definition}

For  $0<\varepsilon<1$ let $\mathcal{G}^\varepsilon:\,\Omega= C_0([0,1]:\RR^d)\to \mathcal{E}$ be a measurable map. We will now give a sufficient condition  on the maps $\mathcal{G}^\varepsilon$ for a LDP to hold for the collection
of $\cle$-valued random variables
 \begin{equation}\label{eq-x}
 X^\varepsilon(\omega)=\mathcal{G}^\varepsilon(\sqrt{\varepsilon} \omega), \; \omega \in \Om.
 \end{equation}

 Recall that $S_N$ is a compact metric space that is equipped with the weak topology inherited from $\mathcal{H}_H$.
 Also recall the canonical filtration $\{\mathcal{F}^H_t\}$ on $(\Omega,\mathcal{F},\mathbb{P})$ and the  classes $\cla, \cla_b$.
\begin{assumption}\label{assum-G}
There exists a measurable map $\mathcal{G}^0:\, \mathcal{H}_H\to\mathcal{E}$ such that the following conditions hold.
\begin{itemize}
\item[(i)] Consider $0<N<\infty$ and a family of $S_N$-valued random elements  $\{v^\varepsilon\}\subset \mathcal{A}_b$  on $(\Omega,\mathcal{F},\mathbb{P})$  such that $v^\varepsilon$ converges in distribution  to $v$. Then $\mathcal{G}^\varepsilon(\sqrt{\varepsilon}\omega+v^\varepsilon)$ converges to $\mathcal{G}^0(v)$ in distribution.
\item[(ii)] For every $0<N<\infty$, the set 
\[
\Gamma_N=\{\mathcal{G}^0(v):\,v\in S_N\}
\]
is a compact subset of $\mathcal{E}$.
\end{itemize}
\end{assumption}

For  $x\in\mathcal{E}$, define
\begin{equation}\label{eq-rate}
I(x)=\inf_{\{v\in\mathcal{H}_H:\,x=\mathcal{G}^0(v)\}}\left\{\frac{1}{2}\Vert v\Vert_{\mathcal{H}_H}^2\right\}
\end{equation}
whenever $\{v\in\mathcal{H}_H: x=\mathcal{G}^0(v)\}\neq\emptyset$, and $I(x)=\infty$ otherwise.\newline

The following theorem gives a LDP under Assumption \ref{assum-G}.

\begin{theorem} \label{general-LP}
Let $X^\varepsilon$ be as defined in \eqref{eq-x}. Suppose that $\{\mathcal{G}^\varepsilon\}$ satisfies Assumption \ref{assum-G}. Then the family $\{X^\varepsilon:\varepsilon\in(0,1)\}$ satisfies a large deviation principle in $\mathcal{E}$, as $\veps \to 0$, with rate function $I$ defined in \eqref{eq-rate}.
\end{theorem}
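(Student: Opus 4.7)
The plan is to adapt the weak-convergence approach of Bou\'e-Dupuis and Budhiraja-Dupuis to the fractional setting, using Proposition \ref{representation} as the fundamental variational identity. Since $\mathcal{E}$ is Polish, by the standard equivalence of the large deviation principle and the Laplace principle, it suffices to prove that for every bounded continuous $f:\mathcal{E}\to\mathbb{R}$,
\begin{equation*}
\lim_{\veps\to 0} -\veps \log \mathbb{E}\bigl[e^{-f(X^\veps)/\veps}\bigr] \;=\; \inf_{x\in\mathcal{E}}\bigl[f(x)+I(x)\bigr].
\end{equation*}
Applying Proposition \ref{representation} to the bounded functional $\omega\mapsto f(\mathcal{G}^\veps(\sqrt{\veps}\,\omega))/\veps$ and then rescaling the control by $u\leftarrow\sqrt{\veps}\,u$ recasts the left hand side as the variational expression
\begin{equation*}
-\veps \log \mathbb{E}\bigl[e^{-f(X^\veps)/\veps}\bigr] = \inf_{u\in\mathcal{A}_b} \mathbb{E}\!\left[f\bigl(\mathcal{G}^\veps(\sqrt{\veps}\,\omega+u)\bigr) + \tfrac{1}{2}\|u\|^2_{\mathcal{H}_H}\right],
\end{equation*}
and the two Laplace bounds will then follow from analyzing this infimum using Assumption \ref{assum-G}.

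For the upper Laplace bound, fix $x_0\in\mathcal{E}$ with $I(x_0)<\infty$ and $\eta>0$, and pick a deterministic $v\in\mathcal{H}_H$ with $\mathcal{G}^0(v)=x_0$ and $\tfrac{1}{2}\|v\|^2_{\mathcal{H}_H}\le I(x_0)+\eta$. Using the constant control $u^\veps\equiv v\in\mathcal{A}_b$ in the variational formula and invoking Assumption \ref{assum-G}(i) with $v^\veps\equiv v$ shows $\mathcal{G}^\veps(\sqrt{\veps}\,\omega+v)\to\mathcal{G}^0(v)=x_0$ in distribution, so bounded convergence yields $\limsup_{\veps\to 0}(-\veps\log\mathbb{E} e^{-f(X^\veps)/\veps}) \le f(x_0)+I(x_0)+\eta$; the claim follows upon taking the infimum over $x_0$ and sending $\eta\to 0$. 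That $I$ has compact level sets is built into Assumption \ref{assum-G}(ii): the inclusion $\{I\le M\}\subset \Gamma_M$ gives precompactness, and closedness follows from lower semicontinuity of $I$, which is proved by picking near-minimizing $v_n\in S_{M+1/n}$ representing a converging sequence $x_n\to x$, extracting a weakly convergent subsequence $v_n\rightharpoonup v^*$ from the compactness of $S_{M+1}$, and applying Assumption \ref{assum-G}(i) to the deterministic controls $v_n$ along a diagonal $\veps_n\to 0$ to conclude $\mathcal{G}^0(v^*)=x$ and therefore $I(x)\le\tfrac{1}{2}\|v^*\|^2_{\mathcal{H}_H}\le M$.

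The main obstacle is the lower Laplace bound. For each $\veps$ select $u^\veps\in\mathcal{A}_b$ that is $\veps$-near optimal in the variational infimum; boundedness of $f$ forces $\mathbb{E}\tfrac{1}{2}\|u^\veps\|^2_{\mathcal{H}_H}\le 2\|f\|_\infty+\veps$, but the almost-sure bound on $\|u^\veps\|_{\mathcal{H}_H}$ is not uniform in $\veps$, so $\{u^\veps\}$ does not a priori take values in a common compact $S_N$. The standard remedy is an adapted truncation: writing $u^\veps=K_H\dot u^\veps$ and stopping the cumulative $L^2$-energy of $\dot u^\veps$ at a fixed level $M$ produces $u^\veps_M\in\mathcal{A}_b$ with $\|u^\veps_M\|^2_{\mathcal{H}_H}\le 2M$ almost surely, while Markov's inequality together with the uniform $L^2$ bound controls the discrepancy between the integrands associated to $u^\veps$ and $u^\veps_M$ by a quantity that vanishes as $M\to\infty$. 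Since $\{u^\veps_M\}$ is automatically tight in the compact Polish space $S_M$, extract a subsequence along which $u^\veps_M$ converges in distribution to some $u^*\in S_M$; Assumption \ref{assum-G}(i) then delivers $\mathcal{G}^\veps(\sqrt{\veps}\,\omega+u^\veps_M)\to\mathcal{G}^0(u^*)$ in distribution, and Skorohod representation together with bounded convergence for the $f$-term and the weak lower semicontinuity of $\|\cdot\|^2_{\mathcal{H}_H}$ gives
\begin{equation*}
\liminf_{\veps\to 0}\mathbb{E}\!\left[f(\mathcal{G}^\veps(\sqrt{\veps}\,\omega+u^\veps_M))+\tfrac{1}{2}\|u^\veps_M\|^2_{\mathcal{H}_H}\right]\ge \mathbb{E}\!\left[f(\mathcal{G}^0(u^*))+\tfrac{1}{2}\|u^*\|^2_{\mathcal{H}_H}\right]\ge \inf_{x\in\mathcal{E}}[f(x)+I(x)].
\end{equation*}
Sending $M\to\infty$ absorbs the truncation error and completes the proof. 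The delicate point is keeping the truncation inside $\mathcal{A}_b$, which forces the stopping to act on the Cameron-Martin density $\dot u^\veps$ rather than on $u^\veps$ itself so that $\{\mathcal{F}_t^H\}$-adaptedness of the truncated control is preserved.
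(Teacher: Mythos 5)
Your proposal is correct and follows essentially the same route as the paper: the same variational formula from Proposition \ref{representation} (with the $\sqrt{\veps}$ rescaling of the control), the same deterministic near-optimal control for the upper bound, and the same stopping-time truncation of the density $\dot u^\veps$ followed by tightness in $S_N$, Assumption \ref{assum-G}(i), Fatou, and weak lower semicontinuity for the lower bound. The only cosmetic difference is in the compactness of level sets, where the paper proves the identity $\{x: I(x)\le M\}=\bigcap_n\Gamma_{M+\frac{1}{n}}$ directly from Assumption \ref{assum-G}(ii), whereas you argue precompactness plus lower semicontinuity; note that strictly one only has $\{x: I(x)\le M\}\subset\Gamma_{M+\eta}$ for each $\eta>0$ rather than $\subset\Gamma_M$, since the infimum defining $I$ need not be attained, but this does not affect your conclusion.
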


\subsection{SDE with a multiplicative fractional noise}
 For  $\varepsilon>0$, consider the following stochastic differential equation
 \begin{equation}\label{small-noise SDE}
 X_t^\varepsilon=x_0+\int_0^tb(s,X_s^\varepsilon)ds+\sqrt{\varepsilon}\int_0^t\sigma(s,X_s^\varepsilon)dB^H_s,\, t\in[0,1].
\end{equation}
where $B^H$ is a $d$-dimensional fBm with $H>1/2$.
For wellposedness of the above equation we introduce the following condition on the coefficients $b$ and $\sigma$.
For a matrix $A=(a^{i,j})_{m\times d}$ and a vector $y=(y^i)_{m\times 1}$, denote $|A|^2=\sum_{i,j}|a^{i,j}|^2$ and $|y|^2=\sum_{i}|y^i|^2$.
\begin{assumption}\label{assum-b-sigma}
\begin{itemize}
\item[(i)] The function $\sigma:\,[0,1]\times \mathbb{R}^m\to \mathbb{R}^{m\times d}$ is differentiable in the space variable $x\in \RR^m$, and there exist  constants $M>0$, $1-H<\lambda\leq1$, $\frac{1}{H}-1<\gamma\leq 1$, and for every $N>0$ there exists $M_N>0$, such that the following hold:
\[
|\sigma(t,x)-\sigma(t,y)|\leq M|x-y|,\, \mbox{ for all } (t,x) \in  [0,1]\times \mathbb{R}^m,
\]
\[
|\partial_{x_i}\sigma(t,x)-\partial_{y_i}\sigma(t,y)|\leq M_N|x-y|^{\gamma}, \mbox{ for all } t\in[0,1], x, y \in \mathbb{R}^m  \mbox{ such that }|x|\vee |y|\leq N,
\]
\[
|\sigma(t,x)-\sigma(s,x)|+|\partial_{x_i}\sigma(t,x)-\partial_{x_i}\sigma(s,x)|\leq M|t-s|^\lambda, \mbox{ for all } x\in\mathbb{R}^m, \mbox{ and } t,\,s\in[0,1],
\]
for each $i=1,\dots,m$.
\item[(ii)] There exists $L>0$ such that the following properties are satisfied:
\[
|b(t,x)-b(t,y)|\leq L|x-y|,\; |b(t,x)|\leq L(1+|x|)\,\mbox{ for all } x,\,y\in\mathbb{R}^m \,\mbox{ and } t\in[0,1].
\]

\end{itemize}
\end{assumption}
\begin{remark}
From Assumption \ref{assum-b-sigma}(i) it follows that there exists a  $K>0$ such that
\begin{equation}\label{linear-sig}
|\sigma(t,x)|\leq K(1+|x|),\,\mbox{ for all } (t,x) \in  [0,1]\times \mathbb{R}^m.
\end{equation}
\end{remark}
\bigskip

Let $g: [0,1] \to \RR^d$ be such that for any $0<\varepsilon<H$, $g \in C^{H-\varepsilon}([0,1]:\mathbb{R}^d)$.  
Consider the deterministic equation on $\mathbb{R}^m$
\begin{equation}\label{deterministic}
x_t=x_0+\int_0^tb(s,x_s)ds+\int_0^t\sigma(s,x_s)dg_s,
\end{equation}
 where $x_0\in\mathbb{R}^m$
 and the last integral is interpreted as a Riemann-Stieltjes integral.

The  wellposedness in  part (a) of the next proposition is taken from \cite[Theorem 5.1]{NR}. The estimate given 
in part (b) of the proposition will play an important role in the analysis and will be established in Section \ref{sec:pfpartb}.

\begin{proposition}\label{existence-uniqueness}
Suppose that the coefficients $b$ and $\sigma$ satisfy Assumption \ref{assum-b-sigma}. Then
the following hold.
\begin{itemize}
\item[(a)]  Equation 
\eqref{deterministic} has a unique solution $x\in W_0^{\alpha,\infty}([0,1]:\mathbb{R}^m)$  for any $\alpha \in (1-H , \min\{\frac{1}{2}, \lambda,\frac{\gamma}{1+\gamma} \})$. Moreover, the solution is $(1-\alpha)$-H\"{o}lder continuous.
\item[(b)] For any $\alpha \in (1-H , \min\{\frac{1}{2}, \lambda,\frac{\gamma}{1+\gamma} \})$ and any $0<\delta<\alpha-(1-H)$,  the following estimates hold:
\begin{equation}\label{equ-4-2}
\Vert x\Vert_{\infty}\leq C_1(1+|x_0|)\exp\{C_2\Vert g\Vert_{1-\alpha+\delta}^\kappa\},
\end{equation}
\begin{eqnarray}\label{equ-m-4-2}
\Vert x\Vert_{1-\alpha}\leq C_3(1+|x_0|)(1+\Vert g\Vert_{1-\alpha+\delta}^\kappa)(1+\Vert g\Vert_{1-\alpha+\delta})(1+\exp\{C_2\Vert g\Vert_{1-\alpha+\delta}^\kappa\}),
\end{eqnarray}
where  $\kappa=\frac{1}{1-\alpha}$ and the constants $C_1$, $C_2$ and $C_3$ depend on $\alpha,\,\delta$ and all the constants appearing in Assumption \ref{assum-b-sigma}.
\end{itemize} 
\end{proposition}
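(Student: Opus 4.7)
The plan is to represent the Young integral in \eqref{deterministic} via the fractional integration by parts formula of Proposition \ref{p1} with exponent $\alpha$, bound the resulting integrand pointwise in terms of $\Vert g\Vert_{1-\alpha+\delta}$ and a suitable local norm of $x$, and then close the estimate via a singular-kernel Gronwall argument in the spirit of \cite{NR}. It is convenient to work with the local quantity
\[
\varphi(t)=\sup_{0\le s\le t}\Bigl\{|x_s|+\int_0^s\frac{|x_s-x_u|}{(s-u)^{\alpha+1}}\,du\Bigr\},
\]
which is finite by part (a) and dominates $\Vert x\Vert_{\infty}$ on $[0,t]$.

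First I would apply Proposition \ref{p1} to write
\[
\int_0^t\sigma(s,x_s)\,dg_s=(-1)^\alpha\int_0^tD_{0+}^\alpha[\sigma(\cdot,x_\cdot)](s)\,D_{t-}^{1-\alpha}g_{t-}(s)\,ds.
\]
Since $g\in C^{1-\alpha+\delta}([0,1]:\RR^d)$ with $\delta>0$, formula \eqref{1.2} gives the uniform bound $|D_{t-}^{1-\alpha}g_{t-}(s)|\le C\Vert g\Vert_{1-\alpha+\delta}(t-s)^\delta$, which removes the boundary singularity. Combining \eqref{1.1} with the Lipschitz-in-$x$ and $\lambda$-Hölder-in-$t$ regularity of $\sigma$ from Assumption \ref{assum-b-sigma}(i) (recall $\lambda>1-H>\alpha-\delta$), I expect the pointwise estimate
\[
|D_{0+}^\alpha[\sigma(\cdot,x_\cdot)](s)|\le C\,s^{-\alpha}\bigl(1+|x_s|\bigr)+C\int_0^s\frac{|\sigma(s,x_s)-\sigma(u,x_u)|}{(s-u)^{\alpha+1}}\,du\le C\,s^{-\alpha}\bigl(1+\varphi(s)\bigr).
\]
Coupling this with the trivial drift bound from Assumption \ref{assum-b-sigma}(ii) and repeating the same manipulations on the increments $|x_s-x_u|$ that enter the definition of $\varphi$, I would arrive at an integral inequality of the form
\[
\varphi(t)\le a+b\int_0^t(t-s)^{-\alpha}\varphi(s)\,ds,\qquad a=c_0(1+|x_0|),\quad b=c_1\bigl(1+\Vert g\Vert_{1-\alpha+\delta}\bigr).
\]

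A singular-kernel Gronwall lemma (cf.\ \cite[Lemma 7.6]{NR}) then yields $\varphi(t)\le a\exp\{C_2\,b^{\kappa}\}$ with $\kappa=1/(1-\alpha)$, which, after absorbing the polynomial in $b$ into the exponential, gives \eqref{equ-4-2}. For the Hölder bound \eqref{equ-m-4-2}, I would reapply Proposition \ref{p1} on a sub-interval $[s,t]$ together with the same pointwise bounds on the two fractional derivatives to produce an estimate of the shape
\[
|x_t-x_s|\le C\bigl(1+\Vert g\Vert_{1-\alpha+\delta}\bigr)\bigl(1+\varphi(1)\bigr)(t-s)^{1-\alpha},
\]
and substituting the Gronwall output for $\varphi(1)$ yields \eqref{equ-m-4-2}.

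The main obstacle is to calibrate the singular-kernel Gronwall step so that the exponent $\kappa=1/(1-\alpha)$ emerges cleanly and the polynomial prefactor $(1+\Vert g\Vert_{1-\alpha+\delta}^\kappa)(1+\Vert g\Vert_{1-\alpha+\delta})$ in \eqref{equ-m-4-2} is extracted. A convenient device is to first establish the estimate on short sub-intervals of length $\Delta$ chosen so that $Cb\Delta^{1-\alpha}\le 1/2$, which turns the integral inequality into a contraction on each piece, and then to paste across the $O(b^{1/(1-\alpha)})$ pieces needed to cover $[0,1]$; each piece contributes a bounded factor, so the iteration produces the exponential growth $\exp\{C_2 b^{1/(1-\alpha)}\}$ with exactly the claimed exponent.
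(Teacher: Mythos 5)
Your proposal is correct and takes essentially the same approach as the paper: both apply the fractional integration-by-parts formula of Proposition \ref{p1}, bound the two Weyl derivatives by $|D_{t-}^{1-\alpha}g_{t-}(r)|\le C\Vert g\Vert_{1-\alpha+\delta}(t-r)^{\delta}$ and $|D_{s+}^{\alpha}\sigma(\cdot,x_\cdot)(r)|\le C(1+|x_s|)(r-s)^{-\alpha}+C(r-s)^{\lambda-\alpha}+C\Vert x\Vert_{s,t,1-\alpha}(r-s)^{1-2\alpha}$, and close the resulting self-referential estimate by subdividing $[0,1]$ into $O\bigl((1+\Vert g\Vert_{1-\alpha+\delta})^{1/(1-\alpha)}\bigr)$ intervals of length $\Delta$ on which the norm of $x$ can be absorbed --- precisely the device of your final paragraph. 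The only cosmetic difference is that the paper tracks the local H\"{o}lder seminorm $\Vert x\Vert_{s,t,1-\alpha}$ and iterates the doubling bound $\sup_{0\le r\le t}|x_r|\le 2\sup_{0\le r\le s}|x_r|+1$ across the subintervals by hand, rather than packaging the iteration as the singular Gronwall lemma of \cite{NR} applied to the $W_0^{\alpha,\infty}$-type quantity $\varphi$.
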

\begin{remark}\label{rem:exisuniq}

As observed above Proposition \ref{p1}, for $f\in C^{\mu_1 }([a,b]:\RR)$ and $h\in C^{\mu_2 }([a,b]:\RR)$ with $\mu_1
+\mu_2 >1$,  the
Riemann-Stieltjes integral $\int_{a}^{b}f_sdh_s$ exists. 
From Assumption  \ref{assum-b-sigma}  and the observation that
 $W_0^{\alpha, \infty}([0,1]:\mathbb{R}^m)\subseteq C^{\alpha-\varepsilon}([0,1]:\RR^m)
 $
 for any $0<\varepsilon<\alpha$, it follows that if for some $\alpha>1-H$, $x \in W_0^{\alpha, \infty}([0,1]:\mathbb{R}^m)$ then
 $s\mapsto \sigma(s, x(s)) \in C^{\delta}([0,1]:\RR^m)$ for some $\delta> 1-H$. Thus, from  properties of $g$ it 
follows that  the integral
$\int_0^t\sigma(s,x_s)dg_s$ is well defined as a Riemann-Stieltjes integral.

\end{remark}

  In Lemma \ref{Holder} it will be seen that,  $\mathcal{H}_H \subset  C^H([0,1]:\RR^d)$ and thus from 
  Proposition \ref{existence-uniqueness}, under Assumption  \ref{assum-b-sigma},  equation \eqref{deterministic} with $g$ replaced by $v$ has a unique solution.
For $f \in C([0,1]:\mathbb{R}^m)$, define
$$
\clc_f \doteq \left\{v\in\mathcal{H}_H:\, f_t=x_0+\int_0^tb(s,f_s)ds+\int_0^t\sigma(s,f_s)dv_s \mbox{ for all } t \in [0,1]\right\}$$
and let 
\begin{equation}\label{eq:rfmul}
I(f)=\inf_{v\in \clc_f}\frac{1}{2}\Vert v\Vert_{\mathcal{H}_H}^2
\end{equation}
whenever $\clc_f\neq \emptyset$, and $I(f)=\infty$ otherwise.\\

We now return to SDE \eqref{small-noise SDE}. Recall that the sample paths of $B^H$ are a.s. in $C^{H-\varepsilon}([0,1]:\mathbb{R}^d)$ for 
 any $0<\varepsilon<H$. Thus as an immediate consequence of  Proposition \ref{existence-uniqueness} and  Fernique's theorem (cf. \cite{MR0413238}) we have the following result. 

\begin{proposition}\label{prop-sde}
Let Assumption \ref{assum-b-sigma} be satisfied.
 \begin{itemize}
\item[(a)] For any $\alpha \in (1-H , \min\{\frac{1}{2}, \lambda,\frac{\gamma}{1+\gamma} \})$,  the SDE 
\eqref{small-noise SDE} has a unique pathwise solution $X^\varepsilon$ with $X^\varepsilon\in W_0^{\alpha,\infty}([0,1]:\mathbb{R}^m)$ a.s. Moreover, for $\mathbb{P}$-almost all $\omega\in \Omega$, the solution $X^\varepsilon(\omega)$ is $(1-\alpha)$-H\"{o}lder continuous. 
\item[(b)] The solution $X^\varepsilon$ to the SDE \eqref{small-noise SDE} satisfies
\begin{equation}\label{equ-4-2-1}
\sup\limits_{\varepsilon\in(0,1)}\mathbb{E}\Vert X^\varepsilon\Vert_{1-\alpha}^p<\infty, \mbox{ for all } p\geq 1.
\end{equation}

\end{itemize} 
\end{proposition}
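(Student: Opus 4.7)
The plan is to derive both parts as essentially pathwise consequences of Proposition \ref{existence-uniqueness}, using the H\"{o}lder regularity of fBm sample paths together with Fernique's theorem to convert the deterministic estimates into probabilistic ones.

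For part (a), I would fix $\alpha\in (1-H, \min\{\frac{1}{2},\lambda,\frac{\gamma}{1+\gamma}\})$ and then choose $\delta>0$ small enough so that $1-\alpha+\delta<H$ (this also enforces $\delta<\alpha-(1-H)$, as needed for part (b) of Proposition \ref{existence-uniqueness}). Since the sample paths of $B^H$ are a.s. $\beta$-H\"{o}lder continuous for every $\beta<H$, there is a set $\Omega_0\in\mathcal{F}$ of full probability on which $B^H(\omega)\in C^{1-\alpha+\delta}([0,1]:\mathbb{R}^d)$, so that $g(\omega)\doteq \sqrt{\varepsilon}B^H(\omega)$ satisfies the regularity hypothesis preceding \eqref{deterministic}. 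Applying Proposition \ref{existence-uniqueness}(a) pathwise for each $\omega\in\Omega_0$ then yields the unique solution $X^\varepsilon(\omega)\in W_0^{\alpha,\infty}([0,1]:\mathbb{R}^m)$ together with $(1-\alpha)$-H\"{o}lder continuity of the sample path.

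For part (b), I would invoke the pathwise estimate \eqref{equ-m-4-2} with $g=\sqrt{\varepsilon}B^H$. Using $\varepsilon\in(0,1)$ to absorb all powers of $\sqrt{\varepsilon}$ into uniform constants gives
\begin{equation*}
\Vert X^\varepsilon\Vert_{1-\alpha}\leq C_3(1+|x_0|)(1+\Vert B^H\Vert_{1-\alpha+\delta}^\kappa)(1+\Vert B^H\Vert_{1-\alpha+\delta})(1+\exp\{C_2\Vert B^H\Vert_{1-\alpha+\delta}^\kappa\}),
\end{equation*}
so the problem reduces to showing that, for every $p\geq 1$, the right hand side has finite $p$-th moment. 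The polynomial factors are controlled because the Gaussian norm $\Vert B^H\Vert_{1-\alpha+\delta}$ has finite moments of all orders, and via the elementary inequality $(1+e^{C_2 x})^p\leq 2^p(1+e^{pC_2 x})$ the remaining task is to bound $\mathbb{E}[\exp(pC_2\Vert B^H\Vert_{1-\alpha+\delta}^\kappa)]$.

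The main obstacle I expect is this exponential moment estimate. The key fact is that the restriction $\alpha<\frac{1}{2}$ gives $\kappa=\frac{1}{1-\alpha}<2$, so that for any $\mu>0$ the function $x\mapsto pC_2 x^\kappa-\mu x^2$ is bounded above on $[0,\infty)$. This yields a deterministic inequality $\exp(pC_2 x^\kappa)\leq C(p,\mu)\exp(\mu x^2)$, whence
\[
\mathbb{E}[\exp(pC_2\Vert B^H\Vert_{1-\alpha+\delta}^\kappa)]\leq C(p,\mu)\,\mathbb{E}[\exp(\mu\Vert B^H\Vert_{1-\alpha+\delta}^2)].
\]
Choosing $\mu$ sufficiently small, Fernique's theorem applied to the centered Gaussian element $B^H$ valued in the H\"{o}lder space $C^{1-\alpha+\delta}([0,1]:\mathbb{R}^d)$ renders the right hand side finite, and an application of H\"{o}lder's inequality to separate the polynomial and exponential contributions then yields \eqref{equ-4-2-1} with a bound uniform in $\varepsilon\in (0,1)$.
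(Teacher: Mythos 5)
Your proposal is correct and follows exactly the route the paper intends: the paper states Proposition \ref{prop-sde} as an immediate consequence of Proposition \ref{existence-uniqueness} and Fernique's theorem and gives no further details, while you supply precisely the missing ones (the pathwise application with $g=\sqrt{\veps}B^H\in C^{1-\alpha+\delta}$, and the observation that $\kappa=\frac{1}{1-\alpha}<2$ lets the exponential factor be dominated by $e^{\mu x^2}$ so that Fernique applies). No gaps.
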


The following is the main result of this work.
\begin{theorem} \label{LPmult}
Suppose that Assumption \ref{assum-b-sigma} is satisfied and let for $\veps \in (0,1)$, $X^\varepsilon$ be defined as in \eqref{small-noise SDE}.  Then the family $\{X^\varepsilon: \veps \in (0,1)\}$ satisfies a large deviation principle in $C([0,1]:\mathbb{R}^m)$
with rate function $I$ defined in \eqref{eq:rfmul}.
\end{theorem}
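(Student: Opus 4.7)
The plan is to deduce Theorem \ref{LPmult} from the general criterion in Theorem \ref{general-LP}. I would define $\mathcal{G}^\varepsilon:\Omega\to C([0,1]:\mathbb{R}^m)$ by letting $\mathcal{G}^\varepsilon(\omega)$ be the unique Riemann--Stieltjes solution of
\[
Y_t = x_0 + \int_0^t b(s,Y_s)\,ds + \int_0^t \sigma(s,Y_s)\,d\omega_s,\qquad t\in[0,1],
\]
whenever $\omega\in C^{H-\eta}([0,1]:\mathbb{R}^d)$ for some fixed small $\eta>0$ (with an arbitrary measurable extension off this set), and $\mathcal{G}^0:\mathcal{H}_H\to C([0,1]:\mathbb{R}^m)$ by the same formula applied to $v\in\mathcal{H}_H$. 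The embedding $\mathcal{H}_H\subset C^H([0,1]:\mathbb{R}^d)$ from Lemma \ref{Holder}, together with Proposition \ref{existence-uniqueness}(a), makes $\mathcal{G}^0$ well-defined. With these choices $X^\varepsilon=\mathcal{G}^\varepsilon(\sqrt{\varepsilon}B^H)$, and the rate function furnished by Theorem \ref{general-LP} coincides with the one in \eqref{eq:rfmul} because $\mathcal{G}^0(v)=f$ is exactly the condition $v\in\clc_f$.

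To verify condition (ii) of Assumption \ref{assum-G}, fix $N<\infty$ and a sequence $v_n\in S_N$ with $v_n\rightharpoonup v$ in $\mathcal{H}_H$, and select $\alpha\in(1-H,\min\{\tfrac12,\lambda,\tfrac{\gamma}{1+\gamma}\})$ and $\delta>0$ with $1-\alpha+\delta<H$. Since pointwise evaluation is a bounded linear functional on $\mathcal{H}_H$ and $\{v_n\}$ is bounded in $C^H$ by Lemma \ref{Holder}, the compact embedding $C^H\hookrightarrow C^{1-\alpha+\delta}$ combined with Arzel\`a--Ascoli gives $v_n\to v$ in $C^{1-\alpha+\delta}$. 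Plugging $g=v_n$ into \eqref{equ-4-2}--\eqref{equ-m-4-2} yields uniform $C^{1-\alpha}$ bounds on $x^{v_n}:=\mathcal{G}^0(v_n)$. A pathwise stability argument (discussed below) then gives $x^{v_n}\to x^v$ in $C([0,1]:\mathbb{R}^m)$, establishing both continuity of $\mathcal{G}^0$ on $S_N$ and compactness of $\Gamma_N$.

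For condition (i), suppose $v^\varepsilon\in\mathcal{A}_b$ is $S_N$-valued and $v^\varepsilon\Rightarrow v$. By Skorohod's representation I may work on a probability space where the convergence holds almost surely in the weak topology of $\mathcal{H}_H$, hence also in $C^{1-\alpha+\delta}$ by the argument above. Fernique's theorem gives $\mathbb{E}\|B^H\|_{1-\alpha+\delta}^p<\infty$ for all $p\geq 1$, so $\sqrt{\varepsilon}B^H\to 0$ in $C^{1-\alpha+\delta}$ in probability, and therefore the driver $g^\varepsilon:=\sqrt{\varepsilon}B^H+v^\varepsilon$ converges to $v$ in $C^{1-\alpha+\delta}$ in probability. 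By linearity of the Young integral, $\mathcal{G}^\varepsilon(\sqrt{\varepsilon}\omega+v^\varepsilon)$ equals the Young solution $Y^\varepsilon$ driven by $g^\varepsilon$, and \eqref{equ-m-4-2} applied pathwise, together with the moment bounds on $B^H$, yields tightness of $\{Y^\varepsilon\}$ in $C^{1-\alpha}([0,1]:\mathbb{R}^m)$. The same pathwise stability lemma identifies every subsequential limit as $\mathcal{G}^0(v)$, yielding convergence of $Y^\varepsilon$ to $\mathcal{G}^0(v)$ in probability in $C([0,1]:\mathbb{R}^m)$, hence in distribution.

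The hard part will be the pathwise stability statement underlying both steps: if $g^n\to g$ in $C^{1-\alpha+\delta}$ then the corresponding Young solutions converge in $C^{1-\alpha}$. Proposition \ref{existence-uniqueness} provides only one-sided a priori bounds, so to obtain stability I would write the difference equation for $x^{g^n}-x^g$, bound the $W_0^{\alpha,\infty}$ norms of $\int(\sigma(\cdot,x^{g^n})-\sigma(\cdot,x^g))\,dg^n$ and $\int \sigma(\cdot,x^g)\,d(g^n-g)$ through the fractional-integration-by-parts formula \eqref{1.8} and the Weyl derivative expressions \eqref{1.1}--\eqref{1.2}, exploit the uniform $C^{1-\alpha}$ bounds just established, and close the estimate with a fractional Gronwall lemma of the type used in \cite{NR}.
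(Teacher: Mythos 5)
Your proposal is correct in outline and reaches the theorem by the same top-level route as the paper --- reduce to Theorem \ref{general-LP} and verify Assumption \ref{assum-G} for the solution maps $\mathcal{G}^\veps$, $\mathcal{G}^0$ --- but both verifications are carried out differently. First, to identify $\mathcal{G}^\varepsilon(\sqrt{\veps}\omega+v^\varepsilon)$ with the solution of the controlled equation \eqref{v-equation} you invoke additivity of the Young integral in the driver, applied to a pathwise-defined solution map on $C^{H-\eta}$; the paper instead characterizes $\mathcal{G}^\veps$ only through the strong solution of \eqref{small-noise SDE} and performs this identification via Girsanov's theorem (Theorem \ref{Girsanov}) in Lemma \ref{lem-4-4}. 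Your route is cleaner here, at the small cost of checking measurability of the pathwise map. Second, and more substantively, your observation that weak convergence in $S_N$ upgrades to strong convergence in $C^{1-\alpha+\delta}$ (via the evaluation bound $|h(t)|\le\Vert h\Vert_{\mathcal{H}_H}$ of Lemma \ref{Holder}, Arzel\`a--Ascoli, and H\"older interpolation) is correct, and it lets you reduce both parts of Assumption \ref{assum-G} to a single deterministic continuity statement for the Young ODE: $g^n\to g$ in $C^{1-\alpha+\delta}$ implies $x^{g^n}\to x^{g}$. The paper deliberately avoids any such two-driver stability estimate: in Lemmas \ref{lem-4-5} and \ref{lem:sufftone} it uses only the one-sided a priori bounds of Proposition \ref{existence-uniqueness} to get relative compactness, rewrites $\int_0^t\sigma(s,X_s)\,dv^n_s$ as an $L^2$ pairing of $\dot v^n$ against a fixed square-integrable kernel (using the explicit formula for $(K_H\dot h)'$), so that weak $L^2$ convergence suffices, and then identifies every subsequential limit through uniqueness of \eqref{equ-4-9}. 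The price of your approach is that the stability lemma, which you rightly flag as the hard part, carries essentially all of the technical content and is only sketched; it is true under Assumption \ref{assum-b-sigma} and your proposed proof (difference equation, fractional integration by parts, fractional Gronwall as in \cite{NR}) is the standard one, but it demands a genuine two-solution comparison estimate that the paper's compactness-plus-uniqueness argument never needs. In short: no logical gap, one deferred but provable lemma, and a heavier analytic burden traded for avoiding Girsanov and weak-topology manipulations.
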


\section{Proofs}\label{sec-4}
In this section we give the proofs of Proposition \ref{representation}, Theorem \ref{general-LP}, Proposition \ref{existence-uniqueness}(b) and
Theorem \ref{LPmult}.
\subsection{Proof of Proposition \ref{representation}}
\label{sec:4.1}
In this subsection we describe the abstract Wiener space for $B^H$ and, using \cite[Theorem 3.2]{Zhang}, provide the proof of the variational representation for functionals of fractional Brownian motion given in Proposition \ref{representation}.

The following elementary lemma shows that functions in $\mathcal{H}_H$ are  $H$-H\"{o}lder continuous.
\begin{lemma}\label{Holder}
Let $h\in\mathcal{H}_H$. Then $h\in C^H([0,1]:\RR^d)$, and $\Vert h\Vert_\infty\leq  \Vert h\Vert_{\mathcal{H}_H}$ and $\Vert h\Vert_{H}\leq  \Vert h\Vert_{\mathcal{H}_H}$. 
\end{lemma}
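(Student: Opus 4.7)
The plan is to reduce both bounds to direct Cauchy--Schwarz estimates in $L^2([0,1]:\mathbb{R}^d)$, using the fact that the $L^2$ norms of $K_H(t,\cdot)$ and of $K_H(t,\cdot)-K_H(s,\cdot)$ can be read off from the fBm covariance structure given in \eqref{covariance}.

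First, I would write an arbitrary $h\in\mathcal{H}_H$ as $h=K_H\dot h$ with $\dot h=(\dot h^1,\dots,\dot h^d)\in L^2([0,1]:\RR^d)$, so that $h^i(t)=\int_0^1 K_H(t,s)\dot h^i(s)\,ds$ and, by definition of the inner product, $\Vert h\Vert_{\mathcal{H}_H}=\Vert \dot h\Vert_{L^2}$. For the sup-norm bound I would apply Cauchy--Schwarz coordinate-wise to obtain
\[
|h(t)|^2=\sum_{i=1}^d\Bigl(\int_0^1 K_H(t,s)\dot h^i(s)\,ds\Bigr)^2\le \Bigl(\int_0^1 K_H(t,s)^2\,ds\Bigr)\Vert \dot h\Vert_{L^2}^2 .
\]
The key input is the identity $\int_0^1 K_H(t,s)^2\,ds=t^{2H}$, which follows from \eqref{bm-2-fbm} and \eqref{covariance} by computing $\mathbb{E}[(B^{H,i}_t)^2]=t^{2H}$ via the It\^o isometry. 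Since $t\in[0,1]$, this gives $|h(t)|\le t^H\Vert \dot h\Vert_{L^2}\le \Vert h\Vert_{\mathcal{H}_H}$, hence $\Vert h\Vert_\infty\le \Vert h\Vert_{\mathcal{H}_H}$.

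For the H\"older bound I would proceed analogously. For $0\le s<t\le 1$, Cauchy--Schwarz yields
\[
|h(t)-h(s)|^2\le \Bigl(\int_0^1 (K_H(t,u)-K_H(s,u))^2\,du\Bigr)\Vert \dot h\Vert_{L^2}^2,
\]
and the same covariance computation via \eqref{bm-2-fbm} and \eqref{covariance} gives $\int_0^1(K_H(t,u)-K_H(s,u))^2\,du=\mathbb{E}[(B^{H,i}_t-B^{H,i}_s)^2]=|t-s|^{2H}$. Therefore $|h(t)-h(s)|\le |t-s|^H\Vert h\Vert_{\mathcal{H}_H}$, which shows $h\in C^H([0,1]:\RR^d)$ and $\Vert h\Vert_H\le \Vert h\Vert_{\mathcal{H}_H}$.

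There is no real obstacle: the whole argument is Cauchy--Schwarz plus the identification of $\Vert K_H(t,\cdot)\Vert_{L^2}$ and $\Vert K_H(t,\cdot)-K_H(s,\cdot)\Vert_{L^2}$ with the standard deviation of $B^{H,i}_t$ and the increment $B^{H,i}_t-B^{H,i}_s$ respectively. The only point to state carefully is that these $L^2$ norms are finite and given by the stated closed-form expressions, which is immediate from \eqref{bm-2-fbm}, \eqref{covariance}, and the It\^o isometry applied component-wise.
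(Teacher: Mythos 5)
Your proof is correct and is essentially the paper's own argument: the paper also reduces both bounds to Cauchy--Schwarz together with the identities $\Vert K_H(t,\cdot)\Vert_{L^2}^2=\mathbb{E}|B^{H,i}_t|^2=t^{2H}$ and $\Vert K_H(t,\cdot)-K_H(s,\cdot)\Vert_{L^2}^2=|t-s|^{2H}$, merely phrasing the Cauchy--Schwarz step probabilistically by writing $h(t)=\mathbb{E}\bigl(B^H_t\int_0^1\dot h(u)\,dB_u\bigr)$ via the It\^o isometry. The two write-ups are interchangeable.
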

\begin{proof}
Without loss of generality, we assume $d=1$.  
For  $h=K_H\dot{h}\in\mathcal{H}_H$ and  $t\in[0,1]$, we have, from the integral representation \eqref{bm-2-fbm},
\begin{equation*}
h(t)=\int_0^1K_H(t,u)\dot{h}(u)du=\mathbb{E}\left(\int_0^1K_H(t,u)dB_u\int_0^1\dot{h}(u)dB_u\right)=\mathbb{E}\left(B_t^H\int_0^1\dot{h}(u)dB_u\right),
\end{equation*}
where $B$ is a standard Brownian motion
and consequently $B^H_t = \int_0^1K_H(t,u)dB_u$  is a fractional Brownian motion  with Hurst parameter $H$.
Then, for any $s,\,t\in[0,1]$, we have
\begin{equation}\label{eq-2-17}
|h(t)|\leq \left(\mathbb{E}\left(|B_t^H|^2\right)\right)^{1/2}\Vert \dot{h}\Vert_{L^2}=t^H\Vert h\Vert_{\mathcal{H}_H}\leq \Vert h\Vert_{\mathcal{H}_H},
\end{equation}
and
\[
|h(t)-h(s)|\leq \left(\mathbb{E}\left(|B_t^H-B_s^H|^2\right)\right)^{1/2}\Vert \dot{h}\Vert_{L^2}=|t-s|^H\Vert \dot{h}\Vert_{L^2}=|t-s|^H\Vert h\Vert_{\mathcal{H}_H},
\]
which completes the proof.
\end{proof}\newline

Recall that $\Omega=C_0([0,1]:\RR^d)$ is a Banach space equipped with the sup-norm $\Vert \cdot \Vert_\infty$.  Let $\Omega^\ast$ be its topological dual. 

We now introduce the abstract Wiener space associated with a fractional Brownian motion.
The following result  is taken from \cite[Theorem 3.3]{DU}.
\begin{lemma}If we  identify  $L^2([0,1]:\RR^d)$ and its dual, we have the following diagram
\[
\Omega^\ast\overset{i_H^\ast}{ \xrightarrow{\hspace{1cm}}} \mathcal{H}_H^\ast\overset{K_H^\ast}{ \xrightarrow{\hspace{1cm}}}L^2([0,1]:\RR^d)\overset{K_H}{ \xrightarrow{\hspace{1cm}}}\mathcal{H}_H\overset{i_H}{ \xrightarrow{\hspace{1cm}}}\Omega
\]
Where $K_H$ is defined by \eqref{K-H-mapping}, $i_H$ is the injection from $\mathcal{H}_H$ into $\Omega$, and $K_H^\ast$ and $i_H^\ast$ are the respective adjoints. 
\begin{itemize}
\item[(a)] The injection $i_H$ embeds $\mathcal{H}_H$ densely into $\Omega$, and $\mathcal{H}_H$ is the Cameron-Martin space of the abstract Wiener space $(i_H,\mathcal{H}_H, \Omega)$ in the sense of Gross \cite{G}.
\item[(b)] The restriction of $K_H^\ast$ to $\Omega^\ast$ can be represented by 
\[
(K_H^\ast\eta)(s)=\int_0^1K_H(t,s)\eta(dt)=\left(\int_0^1K_H(t,s)\eta_1(dt),\dots,\int_0^1K_H(t,s)\eta_d(dt)\right), 
\]
for any $\eta=(\eta_1,\dots,\eta_d)\in\Omega^\ast$.
\end{itemize}
\end{lemma}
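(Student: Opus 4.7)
The plan is to establish (b) first by chasing the adjoint through the duality pairings, and then to use (b) together with the Gaussian structure of $\mathbb{P}$ to prove (a). The lemma is a translation of Gross's abstract-Wiener-space construction to the fractional Brownian motion setting, so the real content is a matching between the explicit integral transform $K_H$ of Lemma \ref{int-trans} and the covariance structure of $\mathbb{P}$.

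For (b), fix $\eta \in \Omega^*$; by the Riesz representation theorem it corresponds to a finite $\RR^d$-valued signed Borel measure on $[0,1]$. For any $f \in L^2([0,1]:\RR^d)$, unwind the adjoint through the successive dualities:
\begin{equation*}
\langle K_H^* i_H^* \eta, f\rangle_{L^2} = \langle i_H^* \eta, K_H f\rangle_{\mathcal{H}_H^*, \mathcal{H}_H} = \langle \eta, i_H K_H f\rangle_{\Omega^*, \Omega} = \int_0^1 (K_H f)(t) \, \eta(dt).
\end{equation*}
Substituting the definition \eqref{K-H-mapping} of $K_H f$ and applying Fubini---justified by Cauchy-Schwarz using $\|K_H(t,\cdot)\|_{L^2} = t^H \leq 1$ together with $\|\eta\|_{\mathrm{TV}} < \infty$---gives $\langle K_H^* i_H^* \eta, f\rangle_{L^2} = \int_0^1 f(s) \bigl(\int_0^1 K_H(t,s) \eta(dt)\bigr) ds$. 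Since $f$ is arbitrary, this identifies $K_H^* i_H^* \eta$ with the claimed function.

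For (a), continuity of $i_H:\mathcal{H}_H \hookrightarrow \Omega$ is immediate from Lemma \ref{Holder}. To identify $\mathcal{H}_H$ as the Cameron-Martin space in Gross's sense, I would use (b) to compute that for every $\eta \in \Omega^*$,
\begin{equation*}
\mathbb{E}\bigl[\langle \eta, B^H\rangle^2\bigr] = \mathbb{E}\Bigl[\Bigl(\int_0^1 (K_H^* i_H^* \eta)(u) \, dB_u\Bigr)^2\Bigr] = \|K_H^* i_H^* \eta\|_{L^2}^2 = \|i_H^* \eta\|_{\mathcal{H}_H^*}^2,
\end{equation*}
where the first equality uses the representation \eqref{bm-2-fbm} and stochastic Fubini, the second is It\^o's isometry, and the last uses that $K_H^*:\mathcal{H}_H^*\to L^2$ is an isometric isomorphism---which itself follows from the definition of the inner product on $\mathcal{H}_H$ that makes $K_H$ an isometry. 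Thus $\mathbb{P}$ is the centered Gaussian measure on $\Omega$ whose covariance operator is $K_H K_H^*$, which is precisely the defining condition for $\mathbb{P}$ to be the abstract Wiener measure associated to $(i_H, \mathcal{H}_H, \Omega)$.

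Density of $\mathcal{H}_H$ in $\Omega$ then follows from a Hahn-Banach argument: if $\mathcal{H}_H$ were not dense, there would exist a nonzero $\eta \in \Omega^*$ annihilating every $h \in \mathcal{H}_H$. Combined with (b), this forces $\int_0^1 K_H(t,s) \eta(dt) = 0$ for a.e.\ $s \in (0,1]$; rewriting this expression via the factorization in Lemma \ref{int-trans} as a fractional Riemann-Liouville integral applied to a function determined by $\eta$, and appealing to the classical injectivity of $I_{0+}^{H+1/2}$ on $L^2$, one concludes $\eta = 0$, a contradiction. I expect this final injectivity argument to be the main obstacle: passing from the vanishing of a Volterra-type integral of the measure $\eta$ against the kernel $K_H$ to the vanishing of $\eta$ itself requires carefully unpacking the structure of $K_H$ via Lemma \ref{int-trans} and using that the fractional integration operator is injective on $L^2$.
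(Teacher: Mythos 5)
The paper does not actually prove this lemma: it is imported verbatim as \cite[Theorem 3.3]{DU} (Decreusefond--\"{U}st\"{u}nel), so there is no internal proof to compare against, and your task was genuinely to reconstruct the cited result. Your reconstruction is sound in outline and follows the same lines as the source. Part (b) is a correct duality chase; the Fubini justification via $\Vert K_H(t,\cdot)\Vert_{L^2}=t^H$ and $\Vert\eta\Vert_{\mathrm{TV}}<\infty$ is exactly what is needed. For (a), the covariance computation $\mathbb{E}\bigl[\langle\eta,B^H\rangle^2\bigr]=\Vert K_H^*i_H^*\eta\Vert_{L^2}^2=\Vert i_H^*\eta\Vert_{\mathcal{H}_H^*}^2$ via \eqref{bm-2-fbm}, stochastic Fubini and the It\^{o} isometry correctly identifies $\mathbb{P}$ as the Gaussian measure whose Cameron--Martin space is $\mathcal{H}_H$, and continuity of $i_H$ from Lemma \ref{Holder} is right.

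Two points deserve correction or caution. First, in the density step the operator whose injectivity you need is not $I_{0+}^{H+1/2}$: the factorization in Lemma \ref{int-trans} describes $K_H$ acting on the $s$-variable and producing a function of $t$, whereas $K_H^*\eta(s)=\int_0^1K_H(t,s)\eta(dt)$ integrates in $t$ and is viewed as a function of $s$, so what enters is the adjoint (right-sided) fractional integral $I_{1-}^{H-1/2}$ conjugated by the power weights $\psi,\psi^{-1}$, applied to $u\mapsto\eta((u,1])$ after an integration by parts in $t$. Injectivity of that right-sided operator, plus right-continuity of $u\mapsto\eta((u,1])$, does give $\eta=0$, so the strategy survives, but as written the appeal to Lemma \ref{int-trans} points at the wrong variable. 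Second, a cheaper route to density that avoids the adjoint entirely: by Lemma \ref{int-trans}, $\mathcal{H}_H=I_{0+}^{H+1/2}(L^2)\supset I_{0+}^{2}(L^2)$, which contains every $C^2$ function vanishing to second order at $0$, and such functions are already sup-norm dense in $C_0([0,1]:\RR^d)$. You may wish to compare your argument with the proof of Theorem 3.3 in \cite{DU}, which proceeds essentially as you do for the Cameron--Martin identification.
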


\bigskip

Next, we introduce the Skorohod integral $\delta(h)$ for  $h\in\mathcal{H}_H$. This is well known (cf. \cite{DU}, \cite{Zhang} and references therein)  but we give a self contained presentation for reader's convenience.
 The injection $i_H^\ast$ embeds $\Omega^\ast$ densely into $\mathcal{H}_H^\ast$, since $\Omega^\ast$ separates $\mathcal{H}_H$. Define $R_H=K_H\circ K_H^\ast$, then $R_H$ embeds $\Omega^\ast$ densely into $\mathcal{H}_H$, namely, for any $h\in\mathcal{H}_H$, there exists a sequence of $\{\eta_n\}_{n=1}^\infty\subset\Omega^\ast$ such that $\lim\limits_{n\to\infty}\Vert R_H \eta_n-h\Vert_{\mathcal{H}_H}=0$. 
Next note that
\begin{align*}
\int |\eta_n(\omega)-\eta_k(\omega)|^2\mathbb{P}(d\omega)&=\mathbb{E}(|\eta_n(\cdot)-\eta_k(\cdot)|^2)\\
&=\mathbb{E}\left(\sum_{1\leq i,j\leq d}\int_0^1\int_0^1\omega^i(t)\omega^j(s)(\eta_n^i-\eta_k^i)(dt)(\eta_n^j-\eta_k^j)(ds)\right)\\
&=\sum_{1\leq i,j\leq d}\int_0^1\int_0^1R_H^{i,j}(t,s)(\eta_n^i-\eta_k^i)(dt)(\eta_n^j-\eta_k^j)(ds)\\
&=\sum_{1\leq i,j\leq d}\int_0^1\int_0^1\int_0^1\delta_{i,j}K_H(t,r)K_H(s,r)dr\,(\eta_n^i-\eta_k^i)(dt)(\eta_n^j-\eta_k^j)(ds)\\
&=\int_0^1|K_H^\ast(\eta_n-\eta_k)(r)|^2dr=\Vert R_H(\eta_n-\eta_k)\Vert_{\mathcal{H}_H}^2\to 0,
\end{align*}
as $n,k\to\infty$. Therefore, there exists $\delta(h)\in L^2(\Omega, \mathcal{F},\mathbb{P})$ such that $\lim\limits_{n\to\infty}\mathbb{E}(|\eta_n(\cdot)-\delta(h)|^2)=0$. The limit $\delta(h)$ is called the Skorohod integral of $h$. Moreover, the following isometry property holds
\begin{equation}
	\label{eq:isom}
\mathbb{E}(\delta(h)\delta(g))=\langle h,g\rangle_{\mathcal{H}_H},  \mbox{ for any } h,\,g\in\mathcal{H}_H.
\end{equation}

Recall the filtered probability space $(\Omega,\mathcal{F},\mathbb{P},\{\mathcal{F}^H_t\})$
introduced below Definition \ref{def:fbm} and recall that 
$\{B^H_t\}$ is the canonical coordinate process
on $(\Omega,\mathcal{F})$.

Define the family $\{\pi_t^H,\, t\in[0,1]\}$ of orthogonal projections in $\mathcal{H}_H$ by
\begin{equation}\label{eq:projfam}
\pi_t^Hh=\pi_t^H(K_H\dot{h})=K_H(\dot{h}\mathbf{1}_{[0,t]}), \; h\in\mathcal{H}_H.
\end{equation}

Define for $i= 1, \ldots d$, $e_i:[0,1]\to \RR^d$ as 
$$(e_i(t))_j = \begin{cases}
0 & \mbox{ if } j \neq i\\
1 & \mbox{ if } j=i
\end{cases},\; t \in [0,1].
$$
Then  $e_i \in L^2([0,1]:\RR^d)$ for $i= 1, \ldots d$. Define the process $B=(B^1,\dots,B^d)$ by
 \begin{equation}\label{bm}
 B^i_t=\delta(\pi^H_tK_He_i), \; t\in[0,1], \; i=1,\dots,d.
 \end{equation}
 
If $h \in \clh_H$ then, it can be shown that, $g$ defined as 
$$g(t) = c_H^{-1}t^{H-\frac12}(D_{0+}^{H-\frac12}(\psi^{-1} h'))(t),\;\; t \in [0,1]$$
is in $L^2([0,1]:\RR^d)$ and we have $h = K_H g$. Henceforth, for $h \in \clh_H$, we will take the function $g$ defined as above the definition of $\dot h$ in the representation $h = K_H \dot h$.

 The following result is a consequence of
  \cite[Proposition 4.4, Theorems 4.3 and 4.8]{DU}.

\begin{lemma}\label{filtration} 
\begin{itemize}
\item[(a)] For any $t\in[0,1]$,  $\mathcal{F}_t^H=\sigma\{\delta(\pi^H_t h),\,h\in\mathcal{H}_H\}\vee \mathcal{N}$. 
\item[(b)] A $\mathcal{H}_H$-valued stochastic process $u=K_H\dot{u}$ is $\{\mathcal{F}^H_t,\,t\in[0,1]\}$-adapted if 
and only if the process $\dot{u}$ is $\{\mathcal{F}^H_t,\,t\in[0,1]\}$-progressively measurable.  
\item[(c)] The process $B=\{B_t=(B^1_t,\dots,B^d_t),\,t\in[0,1]\}$ is a $\mathbb{P}$-standard Brownian motion and
$\sigma\{B_s, s \le t\}\vee \mathcal{N}$
 equals  $\mathcal{F}^H_t$ for $t\in[0,1]$.
\end{itemize}
\end{lemma}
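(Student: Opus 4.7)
I would prove the three parts in the order (c), (a), (b), since the identification of filtrations in (c) drives the measurability assertions in (a), and (a) combined with the isomorphism $K_H:L^2\to\mathcal{H}_H$ yields (b).

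For (c): Each $B^i_t=\delta(\pi_t^H K_H e_i)$ is Gaussian, being an $L^2$-limit of elements of $\Omega^*$ acting on the canonical Gaussian process on $(\Omega,\mathcal{F},\mathbb{P})$. The Skorohod isometry \eqref{eq:isom} gives
\[
\mathbb{E}[B^i_t B^j_s]=\langle \pi_t^H K_H e_i,\pi_s^H K_H e_j\rangle_{\mathcal{H}_H}=\langle \mathbf{1}_{[0,t]}e_i,\mathbf{1}_{[0,s]}e_j\rangle_{L^2}=\delta_{ij}(t\wedge s),
\]
which, combined with joint Gaussianity, identifies $B$ as a standard Brownian motion. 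The inclusion $\mathcal{F}_t^{B^H}\subseteq \mathcal{F}_t^B$ follows from the representation $B^H_t=\int_0^t k_H(t,s)\,dB_s$, which can be verified by checking it on the coordinate evaluation functionals $\eta^i_s\in\Omega^*$ (noting that $K_H^*\eta^i_s(\cdot)=K_H(s,\cdot)e_i$ is supported on $[0,s]$) and extending to general $h\in\mathcal{H}_H$ via the Skorohod isometry. The reverse inclusion $\mathcal{F}_t^B\subseteq \mathcal{F}_t^{B^H}$ is the main obstacle; here I would use the explicit inversion of $K_H$ recorded between Lemma \ref{int-trans} and \eqref{bm}, namely the formula $\dot h(r)=c_H^{-1}r^{H-1/2}(D_{0+}^{H-1/2}(\psi^{-1}h'))(r)$, applied pathwise to $h=B^H$ restricted to $[0,t]$. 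This realizes $\dot B_r$ on $[0,t]$ as a measurable functional of $\{B^H_u:u\le t\}$, whence $B_t$ (recovered from $\dot B$ via $B_t=\int_0^t \dot B_r\,dr$ in a suitable sense) is $\mathcal{F}_t^H$-measurable.

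For (a): By (c), it suffices to prove $\sigma\{\delta(\pi_t^H h):h\in\mathcal{H}_H\}\vee\mathcal{N}=\mathcal{F}_t^B$. For ``$\subseteq$'', take a step function $\dot h=\sum_j c_j\mathbf{1}_{[s_{j-1},s_j]}e_i$ with $s_j\le t$: directly from \eqref{bm}, $\delta(\pi_t^H K_H\dot h)=\sum_j c_j(B^i_{s_j}-B^i_{s_{j-1}})$, which is $\mathcal{F}_t^B$-measurable. Density of such step functions in $\{\dot h\in L^2([0,1]:\RR^d):\dot h=\dot h\mathbf{1}_{[0,t]}\}$ together with the Skorohod isometry extends the claim to all $h\in\mathcal{H}_H$. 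For ``$\supseteq$'', observe that for $s\le t$ and $1\le i\le d$, $B^i_s=\delta(\pi_s^H K_H e_i)=\delta(\pi_t^H K_H(\mathbf{1}_{[0,s]}e_i))$ lies in $\sigma\{\delta(\pi_t^H h)\}\vee\mathcal{N}$.

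For (b): The projection family \eqref{eq:projfam} corresponds under the $L^2$-$\mathcal{H}_H$ isomorphism $K_H$ to multiplication by $\mathbf{1}_{[0,t]}$ on $L^2([0,1]:\RR^d)$. Because elements of $\mathcal{H}_H$ are continuous (Lemma \ref{Holder}), adaptedness of the $\mathcal{H}_H$-valued process $u=K_H\dot u$ (that is, $u(t)$ is $\mathcal{F}_t^H$-measurable for every $t$) is equivalent to $\pi_t^H u$ being an $\mathcal{F}_t^H$-measurable $\mathcal{H}_H$-valued random variable, hence via $K_H$ to $\dot u\mathbf{1}_{[0,t]}$ being an $\mathcal{F}_t^H$-measurable $L^2([0,1]:\RR^d)$-valued random variable for every $t$. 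A standard monotone-class and Fubini argument translates this last property into progressive measurability of $\dot u$ (in one direction, project a progressively measurable $\dot u$ to get $L^2$-measurability; in the other, construct a progressively measurable version of $\dot u$ from its $L^2$-valued conditional projections).

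The chief technical hurdle is the reverse filtration inclusion in (c); all other steps are density arguments combined with the Skorohod isometry and a clean transfer of measurability through the isomorphism $K_H$.
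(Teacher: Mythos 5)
The paper offers no proof of this lemma at all: it is imported wholesale as a consequence of Proposition 4.4 and Theorems 4.3 and 4.8 of Decreusefond and \"Ust\"unel \cite{DU}, so your attempt at a self-contained argument is already a departure in kind. Most of your outline is sound: the covariance computation $\mathbb{E}[B^i_tB^j_s]=\langle \mathbf{1}_{[0,t]}e_i,\mathbf{1}_{[0,s]}e_j\rangle_{L^2}=\delta_{ij}(t\wedge s)$ via the isometry \eqref{eq:isom}, the verification of $B^{H}_t=\int_0^t k_H(t,s)\,dB_s$ by testing on evaluation functionals, both inclusions in (a) once (c) is granted, and the transfer of measurability through $K_H$ in (b) (where the inversion is applied to $u(\omega)\in\mathcal{H}_H$, so it is legitimate) are all correct in outline.

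The genuine gap is exactly where you flag the main obstacle: the inclusion $\sigma\{B_s:s\le t\}\subseteq\mathcal{F}^H_t$ in (c). The inversion formula $\dot h(r)=c_H^{-1}r^{H-1/2}\bigl(D_{0+}^{H-1/2}(\psi^{-1}h')\bigr)(r)$ is stated in the paper only for $h\in\mathcal{H}_H=K_H(L^2)$; it involves the classical derivative $h'$, and for $H>\frac12$ elements of $\mathcal{H}_H$ are indeed differentiable. The sample paths of $B^H$ are only $(H-\varepsilon)$-H\"older, nowhere differentiable, and almost surely not in $\mathcal{H}_H$ (the Cameron--Martin space is a $\mathbb{P}$-null set), so this formula cannot be applied ``pathwise to $h=B^H$ restricted to $[0,t]$'': for $H>\frac12$ the inverse of $K_H$ is a fractional differentiation of order $H+\frac12>1$, too singular for the paths of $B^H$. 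The standard repair, and what \cite{DU} actually proves, is an $L^2$ spanning argument rather than a pathwise inversion: one shows that $K_H(e_i\mathbf{1}_{[0,t]})$ lies in the closed linear span in $\mathcal{H}_H$ of $\{R_H\eta:\eta\in\Omega^\ast,\ \mathrm{supp}\,\eta\subseteq[0,t]\}$, equivalently that the family $\{k_H(s,\cdot)\mathbf{1}_{[0,s]}:s\le t\}$ is total in $L^2([0,t])$; then $B^i_t=\delta(\pi^H_tK_He_i)$ is an $L^2$-limit of finite linear combinations of $\{B^{H,j}_s:s\le t\}$ and hence $\mathcal{F}^H_t$-measurable. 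Until that (or an equivalent Wiener-integral, not pathwise, inversion of $K_H$) is supplied, part (c), and with it your derivation of part (a), is not established.
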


\begin{proof}[Proof of Proposition \ref{representation}] 
The collection  $\{\pi_t^H,\, t\in[0,1]\}$ introduced in \eqref{eq:projfam} defines 
 a continuous and strictly monotonic resolution 
 of the identity in $\mathcal{H}_H$ in the sense of \cite[Section 2]{Zhang} and furthermore the operator
 $\delta(\cdot)$ introduced in the latter paper coincides with the Skorohod integral defined above \eqref{eq:isom}.
 Also, the classes $\cla$ and $\cla_b$ in Section \ref{sec:varrep} are the same as the classes $\clh^a$ and $\clh^a_b$ in \cite{Zhang} when specialized to the setting considered here. The result is now immediate from
 \cite[Theorem 3.2]{Zhang}.
\end{proof}

\subsection{Proof of Theorem \ref{general-LP}}
The proof follows along the lines of   \cite[Theorem 4.4]{BD} however we provide details for reader's convenience. It will be convenient to work with the following equivalent formulation of a LDP.
\begin{definition}
 Let $I$ be a rate function on some Polish space $\mathcal{E}$. A collection $\{X^\varepsilon:\, \varepsilon\in[0,1]\}$ of $\cle$-valued random variables is said to satisfy the Laplace principle upper bound (lower bound, respectively) on $\mathcal{E}$ with rate function $I$ if for all bounded continuous functions $h:\,\mathcal{E}\rightarrow \mathbb{R}$, 
\begin{equation}\label{upper-bound}
\limsup_{\varepsilon\to0} -\varepsilon\log{\mathbb{E}\left(\exp\left\{-\frac{h(X^\varepsilon)}{\varepsilon}\right\}\right)}\leq \inf_{x\in\mathcal{E}}\{h(x)+I(x)\},
\end{equation}
(respectively,
\begin{equation}\label{lower-bound}
\liminf_{\varepsilon\to0} -\varepsilon\log{\mathbb{E}\left(\exp\left\{-\frac{h(X^\varepsilon)}{\varepsilon}\right\}\right)}\geq\inf_{x\in\mathcal{E}}\{h(x)+I(x)\}.)
\end{equation}
The Laplace principle (LP) is said to hold for $\{X^\varepsilon:\,\varepsilon\in[0,1]\}$ with rate function $I$ if both the Laplace upper bound and lower bound are satisfied for all bounded continuous functions $h:\,\mathcal{E}\rightarrow \mathbb{R}$.	

\end{definition}
 It is well known that a collection $\{X^\varepsilon:\,\varepsilon\in[0,1]\}$ of $\cle$-valued random variables satisfies a LDP with rate function
 $I$ if and only if it satisfies a LP with rate function $I$ (cf. \cite[Theorems 1.5 and 1.8]{buddupbook}). \\
 
 We now proceed to the proof of Theorem \ref{general-LP}.

\begin{proof}[Proof of Theorem \ref{general-LP}]
It suffices to prove \eqref{upper-bound} and \eqref{lower-bound}, with $I$ as defined in \eqref{eq-rate}, for all real-valued, bounded and continuous functions $h$ on $\mathcal{E}$, and to prove that $I$ is a rate function.\\

{\em Proof of the upper bound \eqref{upper-bound}:} Without loss we assume that $\inf_{x\in\mathcal{E}}\{h(x)+I(x)\}<\infty$.
Let $\delta>0$ be arbitrary. Then there exists $x_0\in\mathcal{E}$ such that
\begin{equation}\label{s-may-1}
h(x_0)+I(x_0)\leq \inf_{x\in\mathcal{E}}\{h(x)+I(x)\}+\frac{\delta}{2}<\infty.
\end{equation}
From the definition of $I$ there exists $\tilde{v}\in \mathcal{H}_H$ such that 
\[
\frac{1}{2}\Vert \tilde{v}\Vert_{\mathcal{H}_H}^2\leq I(x_0)+\frac{\delta}{2},
\mbox{ and }
x_0=\mathcal{G}^0(\tilde{v}).
\]
Applying Proposition \ref{representation} to the function $h\circ \clg^\varepsilon$, one has
\begin{align}\label{equ-2-27}
 -\varepsilon\log \mathbb{E}\left(\exp\left\{-\frac{h(X^\varepsilon)}{\varepsilon}\right\}\right)=& -\varepsilon\log \mathbb{E}\left(\exp\left\{-\frac{h\circ \clg^\varepsilon(\sqrt{\varepsilon}\omega)}{\varepsilon}\right\}\right)\notag\\
 =&\ \inf_{v\in\mathcal{A}_b}\mathbb{E}\left(h\circ \mathcal{G}^\varepsilon\left(\sqrt{\varepsilon} \omega+v\right)+\frac{1}{2}\Vert v\Vert_{\mathcal{H}_H}^2\right).
\end{align}
Thus, we have
\begin{eqnarray}\label{equ-2-25}
\limsup_{\varepsilon\to0} -\varepsilon\log \mathbb{E}\left(\exp\left\{-\frac{h(X^\varepsilon)}{\varepsilon}\right\}\right)
&=&\limsup_{\varepsilon\to0}\inf_{v\in\mathcal{A}_b}\mathbb{E}\left(h\circ \mathcal{G}^\varepsilon\left(\sqrt{\varepsilon}\omega+v\right)+\frac{1}{2}\Vert v\Vert_{\mathcal{H}_H}^2\right)\nonumber\\
&\leq&\limsup_{\varepsilon\to0}\mathbb{E}\left(h\circ \mathcal{G}^\varepsilon\left(\sqrt{\varepsilon}\omega+\tilde{v}\right)+\frac{1}{2}\Vert \tilde{v}\Vert_{\mathcal{H}_H}^2\right)\nonumber\\
&\leq&\limsup_{\varepsilon\to0}\mathbb{E}\left(h\circ \mathcal{G}^\varepsilon\left(\sqrt{\varepsilon}\omega+\tilde{v}\right)\right)+I(x_0)+\frac{\delta}{2}.
\end{eqnarray}
Since $h$ is bounded and continuous, from (i) in Assumption \ref{assum-G}, as $\varepsilon$ go to $0$ the last term in the above inequality equals
\begin{equation}\label{equ-2-26}
h\circ \mathcal{G}^0\left(\tilde{v}\right)+I(x_0)+\frac{\delta}{2}=h(x_0)+I(x_0)+\frac{\delta}{2}.
\end{equation}
Combining \eqref{s-may-1}, \eqref{equ-2-25} and \eqref{equ-2-26}, we obtain
\[
\limsup_{\varepsilon\to0} -\varepsilon\log \mathbb{E}\left(\exp\left\{-\frac{h(X^\varepsilon)}{\varepsilon}\right\}\right)\leq \inf_{x\in\mathcal{E}}\{h(x)+I(x)\}+\delta.
\]
Since $\delta$ is arbitrary, the upper bound holds.\newline

{\em Proof of the lower bound \eqref{lower-bound}:} Fix $\delta>0$. Then for each $\varepsilon$ there exist $v^\varepsilon\in\mathcal{A}_b$ such that
\begin{equation}\label{equ-2-28}
\inf_{v\in\mathcal{A}_b}\mathbb{E}\left(h\circ \mathcal{G}^\varepsilon\left(\sqrt{\varepsilon} \omega+v\right)+\frac{1}{2}\Vert v\Vert_{\mathcal{H}_H}^2\right)\geq \mathbb{E}\left(h\circ \mathcal{G}^\varepsilon\left(\sqrt{\varepsilon} \omega+v^\varepsilon\right)+\frac{1}{2}\Vert v^\varepsilon\Vert_{\mathcal{H}_H}^2\right)-\delta.
\end{equation}

Next note  that from \eqref{equ-2-27}  and \eqref{equ-2-28},
 for all $\varepsilon$,  $\mathbb{E}\left(\frac{1}{2}\Vert v^\varepsilon\Vert_{\mathcal{H}_H}^2\right)\leq 2M+\delta$, where $M=\Vert h\Vert_\infty$. Now define stopping times $\tau^\varepsilon_N=\inf\{t\in[0,1]: \frac{1}{2}\int_0^t |\dot{v}^\varepsilon(s)|^2ds\geq N\}\wedge 1$, where $v^\varepsilon=K_H\dot{v}^\varepsilon$. Denote $\dot{v}^{\varepsilon,N}=\dot{v}^\varepsilon1_{[0,\tau^\varepsilon_N]}(s)$ and $v^{\varepsilon,N}=K_H\dot{v}^{\varepsilon,N}$. Then the processes $v^{\varepsilon,N}$ are in $\mathcal{A}_b$ with $\frac{1}{2}\Vert v^{\varepsilon,N}\Vert_{\mathcal{H}_H}^2\leq N$ a.s, and moreover
\[
\mathbb{P}(v^{\varepsilon}\neq v^{\varepsilon,N})\leq\mathbb{P}\left(\frac{1}{2}\int_0^1|\dot{v}^\varepsilon(s)|^2ds\geq N\right)=\mathbb{P}\left(\frac{1}{2}\Vert v^\varepsilon\Vert_{\mathcal{H}_H}^2\geq N\right)\leq \frac{2M+\delta}{N}.
\]
Choosing $N$ large enough so that $\frac{2M(2M+\delta)}{N} \le \delta$, we see that \eqref{equ-2-28}
holds with $v^{\varepsilon}$ replaced with $v^{\varepsilon,N}$
and $\delta$ with $2\delta$. 
Henceforth, we will suppress $N$ and denote $v^{\varepsilon,N}$ as $v^{\varepsilon}$.
Note that, by definition,
\begin{equation}\label{v-bound}
\sup_{\veps>0}\frac{1}{2}\Vert v^\varepsilon\Vert_{\mathcal{H}_H}^2\leq N \mbox{ a.s. }
\end{equation}

Since $\delta>0$ is arbitrary, in order to prove the lower bound \eqref{lower-bound}, we now only need to show that
\begin{align}\label{equ-2-28-1}
\liminf\limits_{\varepsilon\to 0} \mathbb{E}\left(h\circ \mathcal{G}^\varepsilon\left(\sqrt{\varepsilon} \omega+v^\varepsilon\right)+\frac{1}{2}\Vert v^\varepsilon\Vert_{\mathcal{H}_H}^2\right)\geq \inf_{x\in\mathcal{E}}\{h(x)+I(x)\}.
\end{align}

Choose a subsequence (still relabelled by $\varepsilon$) along which $v^{\varepsilon}$ converges in distribution to $v$ as $S_N$-valued random elements. Since $h$ is a bounded and continuous function and the function on $\mathcal{H}_H$ defined by $v\mapsto \frac{1}{2}\Vert v\Vert^2_{\mathcal{H}_H}$ is lower semi-continuous with respect to the weak topology, using (i) in Assumption \ref{assum-G} and Fatou's lemma, we  obtain
\begin{eqnarray*}
\liminf_{\varepsilon\to0}\mathbb{E}\left(h\circ \mathcal{G}^\varepsilon\left(\sqrt{\varepsilon}\omega+v^{\varepsilon}\right)+\frac{1}{2}\Vert v^{\varepsilon}\Vert_{\mathcal{H}_H}^2\right)
&\geq& \mathbb{E}\left(h\circ \mathcal{G}^0(v)+\frac{1}{2}\Vert v\Vert_{\mathcal{H}_H}^2\right)\nonumber\\
&\geq &\inf_{\{(x,v): x=\mathcal{G}^0(v)\}}\left\{h(x)+\frac{1}{2}\Vert v\Vert_{\mathcal{H}_H}^2\right\}\nonumber\\
&\geq&\inf_{x\in\mathcal{E}}\{h(x)+I(x)\}.
\end{eqnarray*}
This completes the proof of the lower bound.\newline

{\em Compactness of Level Sets:} In order to prove that $I$ is a rate function, we need to show that, for any $0<M<\infty$, the level set $\{x:\, I(x)\leq M\}$ is compact. In order to prove this,  we will show that
\[
\{x:\, I(x)\leq M\}=\bigcap_{n=1}^\infty \Gamma_{M+\frac{1}{n}}.
\]
The compactness of the level set $\{x:\, I(x)\leq M\}$ will then follow, since from (ii) in Assumption \ref{assum-G} the set $\Gamma_{M+\frac{1}{n}}$ is compact for each $n$.

Let $x\in \mathcal{E}$ with $I(x)\leq M$. Then, for each $n$, from the definition of $I(x)$ there exists $v^n\in\mathcal{H}_H$ such that $\frac{1}{2}\Vert v^n\Vert_{\mathcal{H}_H}^2\leq M+\frac{1}{n}$ and $x=\mathcal{G}^0(v^n)$. This shows $x\in \bigcap_{n=1}^\infty \Gamma_{M+\frac{1}{n}}$. Conversely, suppose  $x\in \bigcap_{n=1}^\infty \Gamma_{M+\frac{1}{n}}$. Then, for each $n$, there exists $v^n\in S_{M+\frac{1}{n}}$ such that $x=\mathcal{G}^0(v^n)$. Thus, we have $I(x)\leq \frac{1}{2}\Vert v^n\Vert_{\mathcal{H}_H}^2\leq M+\frac{1}{n}$, for all $n$. By letting $n$ go to infinity, we conclude that $I(x)\leq M$. This completes the proof that $I$ is a rate function.
\end{proof}
\subsection{Proof of Proposition \ref{existence-uniqueness}(b)}
\label{sec:pfpartb}
\begin{proof}
Without loss of generality we assume that $d=m=1$. Now, fix $0\leq s<t\leq 1$. In the proof, we will use $C$ to denote a generic positive constant which may depend on $\alpha$ and all the constants appearing in Assumption \ref{assum-b-sigma} but is independent of $s$ and $t$. This generic constant may vary from line to line.

From  \eqref{deterministic}, we have
\begin{equation}\label{eqn-m-1}
|x_t-x_s|\leq \int_s^t|b(r,x_r)|dr+\left|\int_s^t\sigma(r,x_r)dg_r\right|.
\end{equation}
Using the Lipschitz property and linear growth of $b$ we see that
\begin{eqnarray}\label{eqn-m-2}
\int_s^t|b(r,x_r)|dr&\leq &L\int_s^t(1+|x_s|+\Vert x\Vert_{s,t,1-\alpha}(r-s)^{1-\alpha})dr\notag\\
&\leq& C(1+|x_s|)(t-s)+C\Vert x\Vert_{s,t,1-\alpha}(t-s)^{2-\alpha}.
\end{eqnarray}
Recalling from Remark \ref{rem:exisuniq} that
$s\mapsto \sigma(s, x(s)) \in C^{\delta}([0,1]:\RR^m)$ for some $\delta> 1-H$
and using the fractional integration by parts formula given in Proposition \ref{p1}, we obtain
\begin{eqnarray*}
\left|\int_s^t\sigma(r,x_r)dg_r\right|&\leq&\int_s^t\left|D_{s+}^\alpha\sigma(r,x_r)D_{t-}^{1-\alpha}g_{t-}(r)\right|dr.
\end{eqnarray*}
From \eqref{1.1} and the conditions on $\sigma$ in Assumption \ref{assum-b-sigma}(i) we see   that, for $s\le r\le t$
\begin{eqnarray*}
\left|D_{s+}^\alpha\sigma(r,x_r)\right|&\leq& C\left(\frac{K(1+|x_s|)+M|x_r-x_s|}{(r-s)^\alpha}+\alpha\int_s^r\frac{M(r-u)^\lambda+M|x_r-x_u|}{(r-u)^{\alpha+1}}du\right)\\
&\leq&C(1+|x_s|)(r-s)^{-\alpha}+C(r-s)^{\lambda-\alpha}+C\Vert x\Vert_{s,t,1-\alpha}(r-s)^{1-2\alpha}.
\end{eqnarray*}
It is easy to verify that
$
\left|D_{t-}^{1-\alpha}g_{t-}(r)\right| \leq C\Vert g\Vert_{1-\alpha+\delta}(t-r)^\delta.
$
Thus,
\begin{eqnarray}\label{eqn-m-3}
\left|\int_s^t\sigma(r,x_r)dg_r\right|&\leq &C\Vert g\Vert_{1-\alpha+\delta}(1+|x_s|)(t-s)^{1-\alpha+\delta} +C\Vert g\Vert_{1-\alpha+\delta}(t-s)^{1+\lambda-\alpha+\delta}\notag\\
&&+C\Vert g\Vert_{1-\alpha+\delta}\Vert x\Vert_{s,t,1-\alpha}(t-s)^{2-2\alpha+\delta}\notag\\
&\leq &C\Vert g\Vert_{1-\alpha+\delta}(1+|x_s|)(t-s)^{1-\alpha+\delta}+C\Vert g\Vert_{1-\alpha+\delta}\Vert x\Vert_{s,t,1-\alpha}(t-s)^{2-2\alpha+\delta}.
\end{eqnarray}
Hence,  from \eqref{eqn-m-1}-\eqref{eqn-m-3} we get
\begin{eqnarray}\label{eqn-m-4}
\Vert x\Vert_{s,t,1-\alpha}\leq C_0(1+\Vert g\Vert_{1-\alpha+\delta})(1+|x_s|)+C_0(1+\Vert g\Vert_{1-\alpha+\delta})\Vert x\Vert_{s,t,1-\alpha}(t-s)^{1-\alpha},
\end{eqnarray}
for some $C_0>0$ independent of $s,t$.
Choose 
\[
\Delta=\left(\frac{1}{2C_0(1+\Vert g\Vert_{1-\alpha+\delta})}\right)^{\frac{1}{1-\alpha}}.
\]
 Then, for all $s, t$ with $t-s\leq \Delta$, we have
 \begin{equation}\label{eq-m-6}
 \Vert x\Vert_{s,t,1-\alpha}\leq 2C_0(1+\Vert g\Vert_{1-\alpha+\delta})(1+|x_s|).
 \end{equation}
 Therefore, for $s,t$ as above,
 \[
 \Vert x\Vert_{s,t,\infty}\leq |x_s|+\Vert x\Vert_{s,t,1-\alpha}(t-s)^{1-\alpha}\leq |x_s|+2C_0(1+\Vert g\Vert_{1-\alpha+\delta})(1+|x_s|)(t-s)^{1-\alpha}.
 \]
In particular, for all $s, t$ with $t-s\leq \Delta$, we  have
 $\Vert x\Vert_{s,t,\infty}\leq 2|x_s|+1,
 $
and hence
\begin{equation}\label{eqn-m-5}
\sup\limits_{0\leq r\leq t}|x_r|\leq 2\sup\limits_{0\leq r\leq s}|x_r|+1.
\end{equation}
Now  divide the interval $[0,1]$ into $n=\left[\frac{1}{\Delta}\right]+1$ subintervals with equal lengths, and  denote $t_k=\frac{k}{n}$, $k=0, 1, \dots, n$. Applying \eqref{eqn-m-5} in each subinterval we obtain
\[
\sup\limits_{0\leq r\leq 1}|x_r|\leq 2^n|x_0|+2^{n-1}+2^{n-2}+\dots+1\leq 2^n(|x_0|+1).
\]
Thus, recalling the definition of $n, \Delta$,
\begin{equation}\label{eq-m-7}
\Vert x\Vert_\infty\leq 2^{1+\left(2C_0(1+\Vert g\Vert_{1-\alpha+\delta})\right)^{\frac{1}{1-\alpha}}}(|x_0|+1).
\end{equation}
Now,  fix $u, v$ in $[0,1]$ with $u<v$. If $v-u\leq \Delta$ then \eqref{eq-m-6} and \eqref{eq-m-7} yield
\begin{eqnarray}\label{eq-m-8}
\frac{|x_v-x_u|}{(v-u)^{1-\alpha}}&\leq& 2C_0(1+\Vert g\Vert_{1-\alpha+\delta})(1+\Vert x\Vert_\infty)\notag\\
&\leq& 2C_0(1+\Vert g\Vert_{1-\alpha+\delta})\left(1+2^{1+\left(2C_0(1+\Vert g\Vert_{1-\alpha+\delta})\right)^{\frac{1}{1-\alpha}}}(|x_0|+1)\right).
\end{eqnarray}
If $v-u>\Delta$, there are  $k_0 =k_0(u,v), k_1=k_1(u,v)\in \{1,\dots, n\}$ such that $k_0 \leq k_1$,
$t_{k_0}-u \le 1/n$, $v- t_{k_1} \le 1/n$, and
\[
u<t_k<v,\  \mbox{for all }\  k=k_0, k_0+1, \dots, k_1.
\]
Then, applying \eqref{eq-m-6} and \eqref{eq-m-7} we  obtain
\begin{eqnarray}\label{eq-m-9}
\frac{|x_v-x_u|}{(v-u)^{1-\alpha}}&\leq& \frac{|x_v-x_{t_{k_1}}|}{(v-t_{k_1})^{1-\alpha}}+ \frac{|x_{t_{k_0}}-x_u|}{(t_{k_0}-u)^{1-\alpha}}+\sum_{k=k_0+1}^{k_1}\frac{|x_{t_k}-x_{t_{k-1}}|}{(t_k-t_{k-1})^{1-\alpha}}\notag\\
&\leq& 2nC_0(1+\Vert g\Vert_{1-\alpha+\delta})(1+\Vert x\Vert_\infty)\notag\\
&\leq&2C_0(1+\Vert g\Vert_{1-\alpha+\delta})\left(\left(2C_0(1+\Vert g\Vert_{1-\alpha+\delta})\right)^{\frac{1}{1-\alpha}}+2\right)\notag\\
&&\times\left(1+2^{1+\left(2C_0(1+\Vert g\Vert_{1-\alpha+\delta})\right)^{\frac{1}{1-\alpha}}}(|x_0|+1)\right).
\end{eqnarray}
The estimates in \eqref{eq-m-7}-\eqref{eq-m-9} imply \eqref{equ-4-2} and \eqref{equ-m-4-2} completing the proof.
\end{proof}

\subsection{Proof of Theorem \ref{LPmult}}
Throughout this subsection we assume that Assumption \ref{assum-b-sigma} holds.
The following lemma is immediate from the  strong existence and pathwise uniqueness result in Proposition \ref{prop-sde}. 
 \begin{lemma}\label{lem-4-3} 
For each $\varepsilon\in(0,1)$, there exists a measurable function
\[
\mathcal{G}^\varepsilon:\,C_0([0,1]:\RR^d)\to C([0,1]:\RR^m)
\]
such that, for any  probability space $(\Omega, \mathcal{F},\mathbb{P})$ with a fractional Brownian motion $B^H$ defined on it, 
\begin{equation}\label{equ-4-6-0}
X^\varepsilon=\mathcal{G}^\varepsilon(\sqrt{\varepsilon}B^H)
\end{equation}
is the unique pathwise solution of \eqref{small-noise SDE}.
Moreover,  for any $\alpha\in(1-H,\min\{\frac{1}{2},\lambda,\frac{\gamma}{1+\gamma}\})$,  the following estimate holds:
\begin{equation}\label{equ-4-6}
\sup\limits_{\varepsilon\in(0,1)}\mathbb{E}\Vert X^\varepsilon\Vert_{1-\alpha}^p<\infty, \ \mbox{for all } \ p\geq 1.
\end{equation}
 \end{lemma}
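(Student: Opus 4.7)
The plan is to deduce Lemma \ref{lem-4-3} from Proposition \ref{prop-sde}(a) by first constructing a deterministic measurable solution map and then using the Hölder-norm estimate in Proposition \ref{existence-uniqueness}(b) together with Fernique's theorem for the moment bound.

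First I would fix $\alpha \in (1-H, \min\{\frac{1}{2},\lambda,\frac{\gamma}{1+\gamma}\})$ and $\delta \in (0, \alpha - (1-H))$. Let $\Omega_0 \subset C_0([0,1]:\mathbb{R}^d)$ be the set of paths $g$ with $\|g\|_{1-\alpha+\delta} < \infty$; this is a Borel subset, and carries full measure under the law of $B^H$ (since $1-\alpha+\delta < H$ and $B^H$ has $\beta$-Hölder sample paths for every $\beta<H$). On $\Omega_0$, Proposition \ref{existence-uniqueness}(a), applied to the driving path $\sqrt{\varepsilon}\,g$, produces a unique solution $z$ of
\begin{equation*}
z_t = x_0 + \int_0^t b(s,z_s)\,ds + \int_0^t \sigma(s,z_s)\,d(\sqrt{\varepsilon}\,g)_s, \quad t\in[0,1],
\end{equation*}
lying in $W_0^{\alpha,\infty}([0,1]:\mathbb{R}^m)$ and Hölder continuous of order $1-\alpha$. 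Set $\mathcal{G}^\varepsilon(g)$ equal to $z$ on $\Omega_0$ and to an arbitrary fixed element of $C([0,1]:\mathbb{R}^m)$ (say the constant path $x_0$) on the complement. Borel measurability of $\mathcal{G}^\varepsilon$ follows from the continuous dependence of the solution on the driving path in the Hölder topology (a standard consequence of the fixed-point argument underlying Proposition \ref{existence-uniqueness}, combined with the estimates there). Since the law of $B^H$ is concentrated on $\Omega_0$, the identity $X^\varepsilon = \mathcal{G}^\varepsilon(\sqrt{\varepsilon}B^H)$ holds almost surely as the unique pathwise solution of \eqref{small-noise SDE}.

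For the moment estimate \eqref{equ-4-6}, I would apply the bound \eqref{equ-m-4-2} to $g = \sqrt{\varepsilon}B^H$ with the chosen $\alpha,\delta$, obtaining, for $\kappa = 1/(1-\alpha)$,
\begin{equation*}
\|X^\varepsilon\|_{1-\alpha} \le C_3(1+|x_0|)\bigl(1 + \varepsilon^{\kappa/2}\|B^H\|_{1-\alpha+\delta}^\kappa\bigr)\bigl(1+\sqrt{\varepsilon}\|B^H\|_{1-\alpha+\delta}\bigr)\bigl(1+\exp\{C_2\varepsilon^{\kappa/2}\|B^H\|_{1-\alpha+\delta}^\kappa\}\bigr).
\end{equation*}
Since $\varepsilon \in (0,1)$, each $\varepsilon^{\kappa/2}$ factor is bounded by $1$, so the right side is dominated (in all $L^p$-norms, uniformly in $\varepsilon$) by
\begin{equation*}
C(1+|x_0|)\bigl(1+\|B^H\|_{1-\alpha+\delta}^\kappa\bigr)\bigl(1+\|B^H\|_{1-\alpha+\delta}\bigr)\bigl(1+\exp\{C_2\|B^H\|_{1-\alpha+\delta}^\kappa\}\bigr).
\end{equation*}

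The main step is then to verify that this random variable has finite $p$-th moment for all $p\ge 1$. Note $\kappa = 1/(1-\alpha) < 2$ because $\alpha < 1/2$. Since $\|B^H\|_{1-\alpha+\delta}$ is the norm of the Gaussian vector $B^H$ in the separable Banach space $C^{1-\alpha+\delta}([0,1]:\mathbb{R}^d)$ and is almost surely finite, Fernique's theorem gives $\mathbb{E}\exp\{c\,\|B^H\|_{1-\alpha+\delta}^2\} < \infty$ for some $c>0$. Because $\kappa<2$, for any $A>0$ one has $A x^\kappa \le c x^2 + C_A$ for all $x \ge 0$, and hence $\mathbb{E}\exp\{A\,\|B^H\|_{1-\alpha+\delta}^\kappa\}<\infty$ for every $A>0$. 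Polynomial factors in $\|B^H\|_{1-\alpha+\delta}$ are absorbed similarly, which yields the uniform bound \eqref{equ-4-6}. The only delicate point is ensuring $1-\alpha+\delta<H$ so that $\|B^H\|_{1-\alpha+\delta}$ is almost surely finite, which was arranged by the choice of $\delta$.
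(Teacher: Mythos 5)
Your proposal is correct and follows essentially the same route as the paper, which declares the lemma ``immediate'' from Proposition \ref{prop-sde} (itself stated as a direct consequence of Proposition \ref{existence-uniqueness} and Fernique's theorem); you simply make explicit the pathwise construction of $\mathcal{G}^\varepsilon$, the application of \eqref{equ-m-4-2} to $g=\sqrt{\varepsilon}B^H$ with the factors $\varepsilon^{\kappa/2}\le 1$, and the integrability of $\exp\{C_2\Vert B^H\Vert_{1-\alpha+\delta}^{\kappa}\}$ via $\kappa=\frac{1}{1-\alpha}<2$. The only cosmetic imprecision is describing $C^{1-\alpha+\delta}([0,1]:\RR^d)$ as a separable Banach space; it is not, and one should instead invoke Fernique's theorem for the almost surely finite, lower semicontinuous (hence measurable) seminorm $\Vert\cdot\Vert_{1-\alpha+\delta}$ on $C_0([0,1]:\RR^d)$, which is exactly how the paper implicitly uses it.
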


\begin{remark} 
The estimate \eqref{equ-4-6} implies the tightness of $\{X^{\varepsilon}\}$ in $C([0,1]:\mathbb{R}^m)$. 
  Moreover, by a similar argument as in the proof of Lemma \ref{lem-4-5}, it can be seen that the sequence of processes $\{X^\varepsilon\}$ converges in distribution to the solution $\phi$ to the following ordinary differential equation
 \begin{equation}\label{ODE}
\phi_t=x_0+\int_0^tb(s,\phi_s)ds.
\end{equation}
The existence and uniqueness of the solution to \eqref{ODE} follows from the Lipschitz condition on $b$
in Assumption \ref{assum-b-sigma}.
\end{remark}
The following  Girsanov theorem is a multi-dimensional version of \cite[Theorem 4.9]{DU}. 
Recall the operator $K_H$ from \eqref{K-H-mapping} and the canonical space $(\Omega,\mathcal{F},\{\mathcal{F}^H_t\}, \mathbb{P})$ along with  the canonical process $B^H$ on this space from Section \ref{sec:4.1}.
\begin{theorem}\label{Girsanov}
Let    $u=K_H\dot{u} \in \mathcal{A}$ be such that
\[
\mathbb{E}\left(\exp\left\{-\int_0^1\dot{u}(s)\cdot dB_s-\frac{1}{2}\int_0^1|\dot{u}(s)|^2ds\right\}\right)=1,
\]
where $B$ is the $d$ -dimensional standard Brownian motion defined by \eqref{bm}
on the probability space $(\Omega,\mathcal{F},\{\mathcal{F}^H_t\}, \mathbb{P})$ and 
\[
\int_0^1\dot{u}_s\cdot dB_s=\sum_{i=1}^d\int_0^1\dot{u}^i_sdB^i_s
\]
is  the usual It\^{o} integral.
 Let $\tilde{\mathbb{P}}$ be the probability measure on $(\Omega,\mathcal{F})$ defined by
\[
\frac{d\tilde{\mathbb{P}}}{d\mathbb{P}}=\exp\left\{-\int_0^1\dot{u}(s)\cdot dB_s-\frac{1}{2}\int_0^1|\dot{u}(s)|^2ds\right\}.
\]
Then the law of the process 
\begin{equation}\label{tilde-fbm}
\tilde{B}^H=\left\{\tilde{B}^H_t=B^H_t+u_t=B^H_t+\int_0^tK_H(t,s)\dot{u}_sds:\, t\in[0,1]\right\}
\end{equation}
under the probability $\tilde{\mathbb{P}}$ is the same as the law of the process $B^H$ under the probability $\mathbb{P}$. Namely, the process $\tilde{B}^H$ is a fractional Brownian motion under $\tilde{\mathbb{P}}$.
\end{theorem}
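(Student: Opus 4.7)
The plan is to reduce the claim to the classical multi-dimensional Girsanov theorem for the standard Brownian motion $B$, together with the Volterra representation \eqref{bm-2-fbm} of $B^H$ in terms of $B$, following the strategy of the one-dimensional argument in \cite[Theorem 4.9]{DU}.

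First I would verify that the hypotheses of the classical Girsanov theorem for a $d$-dimensional Brownian motion are in force. Since $u = K_H\dot{u} \in \mathcal{A}$, Lemma \ref{filtration}(b) implies that $\dot{u}$ is $\{\mathcal{F}^H_t\}$-progressively measurable, and Lemma \ref{filtration}(c) asserts that $\{\mathcal{F}^H_t\}$ coincides (up to $\mathbb{P}$-null sets) with the augmented natural filtration of $B$. Hence $\dot{u}$ is progressively measurable with respect to the Brownian filtration and the It\^o integral $\int_0^t \dot{u}(s)\cdot dB_s$ is well defined. The assumption that the stochastic exponential has $\mathbb{P}$-expectation equal to $1$ is the required Novikov-type condition. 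The classical multi-dimensional Girsanov theorem then yields that under $\tilde{\mathbb{P}}$ the process
\[
\tilde{B}_t = B_t + \int_0^t \dot{u}(s)\,ds,\quad t\in[0,1],
\]
is a $d$-dimensional standard Brownian motion.

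Next I would re-express $\tilde{B}^H$ under $\tilde{\mathbb{P}}$ as a Wiener integral against $\tilde{B}$. Using \eqref{bm-2-fbm} for $B^H$ and the identity $u_t = (K_H\dot{u})(t) = \int_0^1 K_H(t,s)\dot{u}(s)\,ds$, one has componentwise
\[
\tilde{B}^H_t = B^H_t + u_t = \int_0^1 K_H(t,s)\,dB_s + \int_0^1 K_H(t,s)\,\dot{u}(s)\,ds = \int_0^1 K_H(t,s)\,d\tilde{B}_s,
\]
where the last equality follows from the definition of $\tilde{B}$ and the linearity of the Wiener integral with respect to the integrator. Since $\tilde{B}$ is a $d$-dimensional standard Brownian motion under $\tilde{\mathbb{P}}$ and $K_H$ acts diagonally on the $d$ components, applying the fBm representation \eqref{bm-2-fbm} with $\tilde{B}$ in place of $B$ shows that $\tilde{B}^H$ is a $d$-dimensional fractional Brownian motion with Hurst parameter $H$ under $\tilde{\mathbb{P}}$. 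In particular, the law of $\tilde{B}^H$ under $\tilde{\mathbb{P}}$ equals the law of $B^H$ under $\mathbb{P}$.

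The main subtlety is the first step, namely the identification of the relevant filtration and the adaptedness of $\dot{u}$ to the Brownian filtration; this is precisely what Lemma \ref{filtration} was engineered to deliver. The multi-dimensional aspect poses no additional difficulty beyond the one-dimensional case in \cite{DU}, since the components of $B$ are independent standard Brownian motions, $K_H$ is applied coordinatewise, and the density in the statement is the standard multi-dimensional Cameron-Martin-Girsanov density $\mathcal{E}\bigl(-\int_0^\cdot \dot{u}(s)\cdot dB_s\bigr)_1$.
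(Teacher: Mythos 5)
Your argument is correct and is essentially the proof the paper relies on: the paper states this theorem without proof, deferring to \cite[Theorem 4.9]{DU}, and your reduction to the classical Girsanov theorem for the underlying Brownian motion $B$ of \eqref{bm} (via Lemma \ref{filtration} for adaptedness and the filtration identification, then transferring through the kernel $K_H$) is precisely that argument, correctly extended to $d$ dimensions. The only step worth making explicit is the identity $B^H_t=\int_0^1 K_H(t,s)\,dB_s$ for the specific Brownian motion $B$ constructed in \eqref{bm} (rather than an arbitrary one as in \eqref{bm-2-fbm}); this is part of the construction in \cite{DU} underlying Lemma \ref{filtration} and poses no real gap.
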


Let $v\in\mathcal{A}_b$ and consider the  controlled version of the SDE \eqref{small-noise SDE}  given as
\begin{equation}\label{v-equation}
X_t^{\varepsilon,v}=x_0+\int_0^tb(s,X_s^{\varepsilon,v})ds+\int_0^t\sigma(s,X_s^{\varepsilon,v})dv_s+\sqrt{\varepsilon}\int_0^t\sigma(s,X_s^{\varepsilon,v})dB^H_s.
\end{equation}
The following result gives the wellposedness of the above equation.
\begin{lemma}\label{lem-4-4}
Let $\mathcal{G}^\varepsilon$ be as in Lemma \ref{lem-4-3} and let $v\in\mathcal{A}_b$. Define 
\[
X^{\varepsilon,v}=\mathcal{G}^\varepsilon(\sqrt{\varepsilon}B^H+v).
\]
Then $X^{\varepsilon,v}$ is the unique pathwise solution of \eqref{v-equation}. Moreover, for any fixed $\alpha$ with $1-H<\alpha<\min\{\frac{1}{2},\lambda,\frac{\gamma}{1+\gamma}\}$and for any $0<\delta<\alpha-(1-H)$,  we have, a.s.,
\begin{eqnarray}\label{equ-4-8}
\Vert X^{\varepsilon,v}\Vert_{1-\alpha}&\leq& \tilde C_1(1+|x_0|)(1+\Vert B^H\Vert_{1-\alpha+\delta}^\kappa+\Vert v\Vert_{\mathcal{H}_H}^\kappa)(1+\Vert B^H\Vert_{1-\alpha+\delta}+\Vert v\Vert_{\mathcal{H}_H})\notag\\
&&\times(1+\exp\{\tilde C_2\left(\Vert B^H\Vert_{1-\alpha+\delta}^\kappa+\Vert v\Vert_{\mathcal{H}_H}^\kappa\right)\}),
\end{eqnarray}
for all $\varepsilon$, where  $\kappa=\frac{1}{1-\alpha}$ and the constants $\tilde C_1$ and $\tilde C_2$ depend only on $\alpha,\,\delta$ and all the constants appearing in Assumption \ref{assum-b-sigma}.
\end{lemma}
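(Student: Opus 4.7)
The plan is to combine the Girsanov theorem (Theorem \ref{Girsanov}) with the pathwise uniqueness and the deterministic estimate from Proposition \ref{existence-uniqueness}, so as to identify $X^{\varepsilon,v}$ as the unique pathwise solution of \eqref{v-equation} and then bound its H\"older norm.

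Since $v\in\mathcal{A}_b$, I would pick a deterministic $N<\infty$ with $\Vert v\Vert_{\mathcal{H}_H}\le N$ a.s., write $v = K_H\dot v$, and set $u:=v/\sqrt\varepsilon$ (so $\dot u = \dot v/\sqrt\varepsilon$ lies in $\mathcal{A}$ and satisfies $\Vert\dot u\Vert_{L^2}\le N/\sqrt\varepsilon$ a.s.). Boundedness makes Novikov's condition trivial, so Theorem \ref{Girsanov} applies: under the tilted measure $\tilde{\mathbb{P}}$ the process $\tilde B^H := B^H + u$ is a fractional Brownian motion, and $\sqrt{\varepsilon}\tilde B^H = \sqrt\varepsilon B^H + v$. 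Applying the deterministic map $\mathcal{G}^\varepsilon$ then gives $X^{\varepsilon,v} = \mathcal{G}^\varepsilon(\sqrt\varepsilon\tilde B^H)$, and Lemma \ref{lem-4-3}, applied on $(\Omega,\mathcal{F},\tilde{\mathbb{P}})$ with fBm $\tilde B^H$, shows that this is the unique pathwise solution of
$$Y_t = x_0 + \int_0^t b(s,Y_s)\,ds + \sqrt\varepsilon\int_0^t \sigma(s,Y_s)\,d\tilde B^H_s.$$

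Next, using that $\tilde B^H = B^H + v/\sqrt\varepsilon$ as paths, and that Young integration is linear in the integrator once both integrands have sufficient H\"older regularity (Lemma \ref{Holder} gives $v\in C^H$, a.s.\ $B^H\in C^{H-\delta'}$ for any small $\delta'>0$, and by Proposition \ref{existence-uniqueness}(a) and Remark \ref{rem:exisuniq} the map $s\mapsto\sigma(s,X^{\varepsilon,v}_s)$ lies in $C^\mu$ for some $\mu>1-H$), I would split
$$\sqrt\varepsilon\int_0^t\sigma(s,X^{\varepsilon,v}_s)\,d\tilde B^H_s = \sqrt\varepsilon\int_0^t\sigma(s,X^{\varepsilon,v}_s)\,dB^H_s + \int_0^t\sigma(s,X^{\varepsilon,v}_s)\,dv_s,$$
which shows that $X^{\varepsilon,v}$ satisfies \eqref{v-equation} $\tilde{\mathbb{P}}$-a.s., and hence $\mathbb{P}$-a.s. as $\mathbb{P}\sim\tilde{\mathbb{P}}$. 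Pathwise uniqueness for \eqref{v-equation} is then immediate by applying Proposition \ref{existence-uniqueness}(a) $\omega$-by-$\omega$ to the driving path $g(\omega) = \sqrt\varepsilon B^H(\omega) + v(\omega)$.

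For the H\"older estimate, I would apply Proposition \ref{existence-uniqueness}(b) pathwise with this same $g$ to obtain \eqref{equ-m-4-2}. Lemma \ref{Holder} gives $\Vert v\Vert_H\le\Vert v\Vert_{\mathcal{H}_H}$, and since $1-\alpha+\delta<H$ on the unit interval, $\Vert v\Vert_{1-\alpha+\delta}\le \Vert v\Vert_H$. Combined with $\sqrt\varepsilon\le 1$ and the triangle inequality this yields $\Vert g\Vert_{1-\alpha+\delta}\le \Vert B^H\Vert_{1-\alpha+\delta} + \Vert v\Vert_{\mathcal{H}_H}$. Using $(a+b)^\kappa\le 2^{\kappa-1}(a^\kappa + b^\kappa)$ (valid since $\kappa = 1/(1-\alpha)\ge 1$) to split the $\kappa$-th power terms, and absorbing the resulting numerical factors into $\tilde C_1,\tilde C_2$, converts \eqref{equ-m-4-2} into \eqref{equ-4-8}. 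The main obstacle I expect is the decomposition step in the middle paragraph: one must justify that the pathwise Young integral against $\tilde B^H$ produced by Lemma \ref{lem-4-3} under $\tilde{\mathbb{P}}$ decomposes cleanly into its Brownian and control parts, which requires a careful regularity check; once that linearity is in hand, the remainder of the argument is straightforward bookkeeping with results already established.
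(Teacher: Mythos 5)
Your proposal is correct and follows essentially the same route as the paper: Girsanov's theorem (Theorem \ref{Girsanov}) to identify $\mathcal{G}^\varepsilon(\sqrt{\varepsilon}B^H+v)=\mathcal{G}^\varepsilon(\sqrt{\varepsilon}\tilde B^H)$ as a solution of \eqref{v-equation}, followed by a pathwise application of Proposition \ref{existence-uniqueness}(b) with $g=\sqrt{\varepsilon}B^H+v$ and the bound $\Vert v\Vert_{1-\alpha+\delta}\leq\Vert v\Vert_{\mathcal{H}_H}$ from Lemma \ref{Holder}. The only cosmetic difference is that you obtain uniqueness for \eqref{v-equation} directly from the deterministic $\omega$-by-$\omega$ uniqueness of Proposition \ref{existence-uniqueness}(a), whereas the paper invokes Girsanov a second time; both are valid.
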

\begin{proof}
Fix $v=K_H\dot{v}\in\mathcal{A}_b$. Note that
\[
\mathbb{E}\left(\exp\left\{-\frac{1}{\sqrt{\varepsilon}}\int_0^1\dot{v}_s\cdot dB_s-\frac{1}{2\varepsilon}\int_0^1|\dot{v}_s|^2ds\right\}\right)=1,
\]
where $B$ is the $d$ -dimensional standard Brownian motion defined by \eqref{bm}.
 Let $\tilde{\mathbb{P}}$ be the probability measure defined by
\[
\frac{d\tilde{\mathbb{P}}}{d\mathbb{P}}=\exp\left\{-\frac{1}{\sqrt{\varepsilon}}\int_0^1\dot{v}_s\cdot dB_s-\frac{1}{2\varepsilon}\int_0^1|\dot{v}_s|^2ds\right\}.
\]
Then, by Theorem \ref{Girsanov}, the process $\tilde{B}^H=B^H+\frac{1}{\sqrt{\varepsilon}}v$ is a fractional Brownian motion on $(\Omega,\mathcal{F},\tilde{\mathbb{P}},\{\mathcal{F}_t\})$. By Lemma \ref{lem-4-3}, the process $X^{\varepsilon,v}=\mathcal{G}^\varepsilon(\sqrt{\varepsilon}B^H+v)=\mathcal{G}^\varepsilon(\sqrt{\varepsilon}\tilde{B}^H)$ is the unique solution to \eqref{small-noise SDE} on $(\Omega,\mathcal{F},\tilde{\mathbb{P}},\{\mathcal{F}_t\})$. Note that the equation \eqref{small-noise SDE} with $\tilde{B}^H$ is precisely the same as the equation \eqref{v-equation}. Since the probability measures $\tilde{\mathbb{P}}$ and $\mathbb{P}$ are equivalent (i.e. mutually absolutely continuous), we see that $X^{\varepsilon,v}$ is a strong solution of \eqref{v-equation} on $(\Omega,\mathcal{F},\mathbb{P},\{\mathcal{F}_t\})$. Uniqueness of solutions of \eqref{v-equation} is argued similarly using the uniqueness result in Lemma \ref{lem-4-3} and  Girsanov's theorem (Theorem \ref{Girsanov}) once more.

Now fix $\alpha$ with $1-H<\alpha<\min\{\frac{1}{2},\lambda,\frac{\gamma}{1+\gamma}\}$ and $0<\delta<\alpha-(1-H)$.  From Lemma \ref{Holder}, it follows that
\[
\Vert v\Vert_{1-\alpha+\delta}\leq \Vert v\Vert_H\leq \Vert v\Vert_{\mathcal{H}_H}.
\]
Then, applying Proposition \ref{existence-uniqueness} (with $g_s = \sqrt{\veps}B^H_s + v_s$), we 
 obtain \eqref{equ-4-8} 
for some constants $\tilde C_1$ and $\tilde C_2$, depending only on  the constants $C_1, C_2$  in Proposition \ref{existence-uniqueness}.
\end{proof}
 
\bigskip
Next, for  $v\in\mathcal{H}_H$,  consider the following deterministic equation
\begin{equation}\label{equ-4-9}
X^{0,v}=x_0+\int_0^tb(s,X^{0,v}_s)ds+\int_0^t\sigma(s,X^{0,v}_s)dv_s.
\end{equation}
Due to Assumption \ref{assum-b-sigma} and the H\"{o}lder continuity of $v$ proved in Lemma \ref{Holder}, Proposition \ref{existence-uniqueness} implies the existence and uniqueness of the solution to \eqref{equ-4-9}. Furthermore, for any $\alpha\in(1-H,\min\{\frac{1}{2},\lambda,\frac{\gamma}{1+\gamma}\})$, from \eqref{equ-4-2} and  the H\"{o}lder continuity of $v$ it follows that 
\begin{equation}\label{equ-4-10}
\Vert X^{0,v}\Vert_{1-\alpha}\leq C_3(1+|x_0|)(1+\Vert v\Vert_{\mathcal{H}_H}^\kappa)(1+\Vert v\Vert_{\mathcal{H}_H})(1+\exp\left\{C_2\Vert v\Vert_{\mathcal{H}_H}^\kappa\right\} ),
\end{equation}
where  $\kappa=\frac{1}{1-\alpha}$ and  $C_2$, $C_3$ are the constants in Proposition \ref{existence-uniqueness}.

Now, define the map $\mathcal{G}^0:\,\mathcal{H}_H\to C([0,1]:\RR^m)$ by 
	\begin{equation}\label{G-0-4}
	\mathcal{G}^0(v)=X^{0,v}, \; v\in\mathcal{H}_H,
	\end{equation}
	 where $X^{0,v}$ is the unique solution to \eqref{equ-4-9}.
\begin{lemma}\label{lem-4-5}
 Let the map $\mathcal{G}^0$ be as in \eqref{G-0-4}.
  Then for each $N <\infty$, the restriction of the map $\mathcal{G}^0$ to $S_N$ is continuous.
\end{lemma}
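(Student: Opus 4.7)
Fix $N < \infty$, and let $\{v_n\} \subset S_N$ converge weakly in $\mathcal{H}_H$ to some $v \in S_N$. Writing $X^n = \mathcal{G}^0(v_n) = X^{0,v_n}$ and $X = \mathcal{G}^0(v) = X^{0,v}$, my goal is to show $X^n \to X$ in $C([0,1]:\mathbb{R}^m)$. I will proceed by the standard Arzel\`a-Ascoli plus subsequence-uniqueness route, with the key work being the continuity of the Young integral in the appropriate H\"older topologies.

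First, I would collect compactness inputs. Fix $\alpha \in (1-H, \min\{\frac12, \lambda, \frac{\gamma}{1+\gamma}\})$. Since $\|v_n\|_{\mathcal{H}_H}^2 \le 2N$, Lemma 4.3 gives a uniform bound $\sup_n \|v_n\|_H \le \sqrt{2N}$, and the estimate \eqref{equ-4-10} gives a uniform bound $\sup_n \|X^n\|_{1-\alpha} < \infty$. Hence $\{v_n\}$ is relatively compact in $C^{H-\delta}$ and $\{X^n\}$ is relatively compact in $C^{1-\alpha-\delta}$ for every small $\delta>0$, by Arzel\`a-Ascoli. Next, weak convergence $v_n \rightharpoonup v$ in $\mathcal{H}_H$ together with the bound $|v(t)| \le \|v\|_{\mathcal{H}_H}$ of Lemma 4.3 (which makes evaluation $v \mapsto v(t)$ a continuous linear functional on $\mathcal{H}_H$) forces $v_n(t) \to v(t)$ pointwise; combined with the $C^H$-bound this yields $v_n \to v$ uniformly, and hence, by interpolation between $\|\cdot\|_\infty$ and $\|\cdot\|_H$, one obtains $\|v_n - v\|_{H-\delta} \to 0$ for every $\delta > 0$.

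Next I would extract a subsequence (relabelled $\{X^n\}$) with $X^n \to \tilde X$ in $C^{1-\alpha-\delta}$, and pass to the limit in
\[
X^n_t = x_0 + \int_0^t b(s, X^n_s)\, ds + \int_0^t \sigma(s, X^n_s)\, dv_{n,s}.
\]
The drift term converges to $\int_0^t b(s, \tilde X_s)\, ds$ by the Lipschitz bound on $b$ and uniform convergence of $X^n$. For the Riemann-Stieltjes integral, choose $\delta$ small enough that $(1-\alpha-\delta) + (H-\delta) > 1$. Writing
\[
\int_0^t \sigma(s, X^n_s)\, dv_{n,s} - \int_0^t \sigma(s, \tilde X_s)\, dv_s = \int_0^t [\sigma(s, X^n_s) - \sigma(s, \tilde X_s)]\, dv_{n,s} + \int_0^t \sigma(s, \tilde X_s)\, d(v_n - v)_s,
\]
the standard Young-Love estimate bounds the first term by a constant times $\|\sigma(\cdot, X^n) - \sigma(\cdot, \tilde X)\|_{1-\alpha-\delta}\, \|v_n\|_{H-\delta}$ plus lower-order terms and, via the Lipschitz and H\"older regularity assumptions on $\sigma$ in Assumption \ref{assum-b-sigma}(i), this tends to zero. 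The second term is bounded by a constant times $\|\sigma(\cdot, \tilde X)\|_{1-\alpha-\delta}\, \|v_n - v\|_{H-\delta}$, which tends to zero by the interpolation argument above. Thus $\tilde X$ satisfies equation \eqref{equ-4-9} with the control $v$, and by uniqueness (Proposition \ref{existence-uniqueness}(a)) we conclude $\tilde X = X^{0,v} = \mathcal{G}^0(v)$. Since every subsequence of $\{X^n\}$ admits a further subsequence converging in $C$ to $\mathcal{G}^0(v)$, the whole original sequence converges, proving continuity.

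The main obstacle is the quantitative Young-integral continuity step: one must verify that the Young bounds used (effectively Proposition \ref{p1} combined with fractional derivative estimates as in the proof of Proposition \ref{existence-uniqueness}(b)) carry through uniformly in $n$, which requires the uniform H\"older bounds on both $X^n$ and $v_n$ from the compactness inputs above. Everything else is a routine Arzel\`a-Ascoli/subsequence argument once the Young-integral continuity is in hand.
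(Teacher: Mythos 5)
Your argument is correct in outline but follows a genuinely different route from the paper's. The shared skeleton is the same: a uniform bound on $\Vert X^{0,v^n}\Vert_{1-\alpha}$ from \eqref{equ-4-10}, Arzel\`a--Ascoli, passage to the limit in the equation, identification of the limit by uniqueness, and a subsequential argument. The difference is in how convergence of the Riemann--Stieltjes integrals is obtained. You first upgrade the weak convergence $v^n\rightharpoonup v$ in $\mathcal{H}_H$ to strong convergence in $C^{H-\delta}$ (pointwise convergence via continuity of the evaluation functionals, which holds because $K_H(t,\cdot)\in L^2([0,1])$ as used in Lemma \ref{Holder}, combined with the uniform $C^H$ bound and interpolation), and then invoke Love--Young continuity estimates. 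The paper instead keeps only the weak convergence: it uses the explicit derivative formula for $h=K_H\dot h\in\mathcal{H}_H$ coming from \eqref{H-big} to rewrite $\int_0^t f\,dv^n$ as an $L^2([0,1])$-pairing of a fixed kernel against $\dot v^n$ (see \eqref{equ-4-13}), so that the term involving $\dot v^n-\dot v$ vanishes directly by weak $L^2$ convergence, while the other term is controlled by $\Vert X^{0,v^n}-X\Vert_\infty\Vert\dot v^n\Vert_{L^2}$. Your route is more generic (it uses nothing about $K_H$ beyond Lemma \ref{Holder}) and gives convergence of the integrals uniformly in $t$ rather than for each fixed $t$; the paper's route avoids the Young machinery at this stage, needing only uniform convergence of $X^{0,v^n}$ and square-integrability of the kernel.

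One step you should make explicit: the claim that $\Vert\sigma(\cdot,X^n)-\sigma(\cdot,\tilde X)\Vert_{1-\alpha-\delta}\to 0$ does not follow from the Lipschitz condition on $\sigma$ alone, since the H\"older seminorm of a difference of compositions is a second-order increment. It does follow by interpolation: the seminorms $\Vert\sigma(\cdot,X^n)\Vert_{\beta}$ and $\Vert\sigma(\cdot,\tilde X)\Vert_{\beta}$ with $\beta=\min\{\lambda,1-\alpha-\delta\}$ are bounded uniformly in $n$ by Assumption \ref{assum-b-sigma}(i) and the uniform H\"older bounds on $X^n$, the sup-norm of the difference tends to zero by the Lipschitz bound, and hence $\Vert\sigma(\cdot,X^n)-\sigma(\cdot,\tilde X)\Vert_{\beta'}\to 0$ for any $\beta'<\beta$; since $\lambda>1-H$ and $1-\alpha<H$, one can choose $\delta$ and $\beta'$ so that $\beta'+(H-\delta)>1$ and the Young condition still holds. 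With this point repaired, your proof is complete.
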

\begin{proof} Let $v^n=K_H(\dot{v}^n)$ and $v=K_H\dot{v}$ be in $S_N$, and assume that $v^n$ converges to $v$ in $S_N$ (i.e. under the weak topology on $\mathcal{H}_H$).

Let $X^{0,v^n}$ be the solution to \eqref{equ-4-9} with $v$ replaced by $v^n$, that is, it satisfies the following equation
\begin{equation}\label{equ-4-12}
X^{0,v^n}_t=x_0+\int_0^t b(s,X^{0,v^n}_s)ds+\int_0^t\sigma(s,X^{0,v^n}_s)dv^n_s.
\end{equation}

Now, fix $\alpha\in(1-H,\min\{\frac{1}{2},\lambda,\frac{\gamma}{1+\gamma}\})$.   Note that $\sup_{n}\Vert v^n\Vert_{\mathcal{H}_H}\le \sqrt{2N} <\infty$, and hence, by \eqref{equ-4-10}, we see that $\sup_{n} \Vert X^{0,v^n}\Vert_{1-\alpha}<\infty$. Thus, the sequence $\{X^{0,v^n}\}$ is relatively compact in $C([0,1]:\mathbb{R}^m)$ and therefore for any subsequence of $\{X^{0,v^n}\}$, there is a  further subsequence (still relabeled by $n$) such that $X^{0,v^n}$ converges to $X$ in $C([0,1]:\mathbb{R}^m)$.

For  any $h = K_H \dot{h} \in \mathcal{H}_H$, \eqref{H-big} implies that $h$ is differentiable and 
\begin{equation}
h'(t) = c_Ht^{H-\frac{1}{2}}(I^{H-\frac{1}{2}}_{0+}(\psi^{-1}\dot{h}))(t)=\frac{c_Ht^{H-\frac{1}{2}}}{\Gamma(H-\frac12)}\int_0^t(t-s)^{H-\frac32}s^{\frac12-H}\dot{h}(s)ds,
\end{equation}
where $\psi$ is as in Lemma \ref{int-trans}.
Thus, if $f\in C([0,1]:\RR^d)$, the Riemann-Stieltjes integral $\int_0^1 f(s)dh(s)$ is well-defined and equals $\int_0^1f(s)h'(s)ds$.
  
From  the Lipschitz condition on $\sigma$ and changing the order of integration, we have 
  \begin{eqnarray}\label{equ-4-13}
 && \left|\int_0^t\sigma(s,X^{0,v^n}_s)dv^n_s-\int_0^t\sigma(s,X_s)dv_s\right|\nonumber\\
  &\leq& \left|\int_0^t\sigma(s,X^{0,v^n}_s)dv^n_s-\int_0^t\sigma(s,X_s)dv^n_s\right|+ \left|\int_0^t\sigma(s,X_s)dv^n_s-\int_0^t\sigma(s,X_s)dv_s\right|\nonumber\\
  &=&\frac{c_H}{\Gamma(H-\frac12)}\left|\int_0^t(\sigma(s,X^{0,v^n}_s)-\sigma(s,X_s))s^{H-\frac{1}{2}}\left(\int_0^s(s-u)^{H-\frac{3}{2}}u^{\frac{1}{2}-H}\dot{v}^n_udu\right)ds\right|\nonumber\\
  &&+\frac{c_H}{\Gamma(H-\frac12)}\left|\int_0^t\sigma(s,X_s)s^{H-\frac{1}{2}}\left(\int_0^s(s-u)^{H-\frac{3}{2}}u^{\frac{1}{2}-H}(\dot{v}^n_u-\dot{v}_u)du\right)ds\right|\nonumber\\
  &\leq &\frac{Mc_H}{\Gamma(H-\frac12)}\Vert X^{0,v^n}-X\Vert_\infty\int_0^tu^{\frac{1}{2}-H}|\dot{v}^n_u|\left(\int_u^t s^{H-\frac{1}{2}}(s-u)^{H-\frac{3}{2}}ds\right)du\nonumber\\
  &&+\frac{c_H}{\Gamma(H-\frac12)}\left|\int_0^tu^{\frac{1}{2}-H}\left(\int_u^t s^{H-\frac{1}{2}}(s-u)^{H-\frac{3}{2}}\sigma(s,X_s)ds\right)(\dot{v}^n_u-\dot v_u)du\right|.
  \end{eqnarray}
Observe that the first term on the right-hand side of \eqref{equ-4-13} is no more than
\begin{align}
&C\Vert X^{0,v^n}-X\Vert_{\infty}\int_0^1u^{\frac{1}{2}-H}|\dot{v}^n_u|\left(\int_u^1 (s-u)^{H-\frac{3}{2}}ds\right)du\nonumber\\
&\quad \leq C\Vert X^{0,v^n}-X\Vert_{\infty}\int_0^1u^{\frac{1}{2}-H}|\dot{v}^n_u|du\nonumber\\
&\quad \leq C\Vert X^{0,v^n}-X\Vert_{\infty}\Vert\dot{v}^n\Vert_{L^2}\notag\\
&\quad = C\Vert X^{0,v^n}-X\Vert_\infty\Vert v^n\Vert_{\mathcal{H}_H}\leq C\sqrt{2N}\Vert X^{0,v^n}-X\Vert_\infty,
\label{eq:nbd}
\end{align}
where $C$ is a  constant depending on $M$ and $H$ which may vary from line to line. 
From  the uniform convergence of $X^{0,v^n}$ to $X$ the right side of \eqref{eq:nbd}  converges to $0$
 and thus we obtain 
\begin{equation}\label{equ-4-15}
\lim\limits_{n\to\infty} \Vert X^{0,v^n}-X\Vert_\infty\int_0^tu^{\frac{1}{2}-H}|\dot{v}^n_u|\left(\int_u^t s^{H-\frac{1}{2}}(s-u)^{H-\frac{3}{2}}ds\right)du=0.
\end{equation}
For the second term on the right-hand side of \eqref{equ-4-13}, we first study the function (in $u$) $f^t(u)=1_{[0,t]}(u)\int_u^t s^{H-\frac{1}{2}}(s-u)^{H-\frac{3}{2}}\sigma(s,X_s)ds$. From \eqref{linear-sig}, we get
\begin{equation}\label{equ-4-16}
|f^t(u)|\leq K(1+\Vert X\Vert_\infty)1_{[0,t]}(u)\int_u^ts^{H-\frac{1}{2}}(s-u)^{H-\frac{3}{2}}ds\leq  C(1+\Vert X\Vert_\infty),
\end{equation}
where $C$ is a constant depending on $K$ and $H$. Thus it follows that the function  $u \mapsto u^{\frac{1}{2}-H}f^t(u)$ is in $L^2([0,1]:\RR^{m\times d})$. Thus, from the convergence of $v^n$ to $v$ (in the weak topology), we have, for every
$t\in [0,1]$, 
\begin{equation}\label{equ-4-17}
\lim\limits_{n\to\infty}\left|\int_0^tu^{\frac{1}{2}-H}\left(\int_u^t s^{H-\frac{1}{2}}(s-u)^{H-\frac{3}{2}}\sigma(s,X_s)ds\right)(\dot{v}^n_u-\dot v_u)du\right|=0.
\end{equation}
From \eqref{equ-4-13}, \eqref{equ-4-15}, \eqref{equ-4-17}, we see that, for each $t\in[0,1]$,
\begin{equation}\label{equ-4-18}
\lim\limits_{n\to\infty}\int_0^t\sigma(s,X^{0,v^n}_s)dv^n_s=\int_0^t\sigma(s,X_s)dv_s.
\end{equation}

From \eqref{equ-4-12}, \eqref{equ-4-18}, the uniform convergence of $X^{0,v^n}$ to $X$  and the Lipschitz condition on $b$, we have by sending $n$ to infinity that $X$ satisfies  equation \eqref{equ-4-9}. Recalling that \eqref{equ-4-9} has a unique solution, we get $X=X^{0,v}$.
Thus, by a standard subsequential argument, we see that  the full sequence  $\mathcal{G}^0(v^n)=X^{0,v^n}$  converges to $\mathcal{G}^0(v)=X^{0,v}$ in $C([0,1]:\mathbb{R}^m)$. The proof is complete.
 \end{proof}\\

\begin{lemma}\label{lem:sufftone}
	Fix   $N \in (0, \infty)$. Let $\{v^\varepsilon\}\subset \mathcal{A}_b$ be a family of $S_N$-valued random variables   on $(\Omega,\mathcal{F},\mathbb{P})$  such that $v^\varepsilon$ converges in distribution  to $v$ in the weak topology on $S_N$. Then
	 $\mathcal{G}^\varepsilon(\sqrt{\varepsilon}\omega+v^\varepsilon)$ converges to $\mathcal{G}^0(v)$ in distribution.
\end{lemma}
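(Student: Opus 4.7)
The strategy is to reduce to the pathwise analysis carried out in Lemma~\ref{lem-4-5}. The map $\mathcal{G}^{\varepsilon}(\sqrt{\varepsilon}\omega+v^{\varepsilon})$ is, by Lemma~\ref{lem-4-4}, the unique solution $X^{\varepsilon,v^{\varepsilon}}$ of the controlled equation \eqref{v-equation} (with $v$ replaced by $v^{\varepsilon}$), so the task reduces to showing that $X^{\varepsilon,v^{\varepsilon}}\to X^{0,v}$ in distribution in $C([0,1]:\RR^m)$. Fix $\alpha\in(1-H,\min\{\tfrac12,\lambda,\tfrac{\gamma}{1+\gamma}\})$ and $0<\delta<\alpha-(1-H)$.

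\textbf{Step 1 (Tightness and Skorokhod representation).} By \eqref{equ-4-8} combined with $\|v^{\varepsilon}\|_{\mathcal{H}_H}\le\sqrt{2N}$ and the fact that $\sqrt{\varepsilon}\|B^H\|_{1-\alpha+\delta}\to 0$ in probability (since $1-\alpha+\delta<H$ and $B^H\in C^{1-\alpha+\delta}$ by Fernique), the random H\"older norms $\|X^{\varepsilon,v^{\varepsilon}}\|_{1-\alpha}$ form a tight family. Hence $\{X^{\varepsilon,v^{\varepsilon}}\}$ is tight in $C([0,1]:\RR^m)$. Joint tightness of $(X^{\varepsilon,v^{\varepsilon}},v^{\varepsilon},\sqrt{\varepsilon}B^H)$ in $C([0,1]:\RR^m)\times S_N\times C^{1-\alpha+\delta/2}([0,1]:\RR^d)$ follows; by the Skorokhod representation theorem, along a subsequence (still indexed by $\varepsilon$) we may assume on a common probability space the a.s.\ convergence of $(X^{\varepsilon,v^{\varepsilon}},v^{\varepsilon},\sqrt{\varepsilon}B^H)$ to some $(X,v,0)$, where $v$ has the stated distribution and $X$ is a priori only a $C^{1-\alpha}$ continuous process.

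\textbf{Step 2 (Passage to the limit in the drift and noise terms).} Uniform convergence of $X^{\varepsilon,v^{\varepsilon}}$ to $X$ and Assumption~\ref{assum-b-sigma}(ii) give $\int_0^t b(s,X^{\varepsilon,v^{\varepsilon}}_s)\,ds\to \int_0^t b(s,X_s)\,ds$ pointwise. For the small-noise Riemann--Stieltjes term we apply the Young-type bound implicit in Proposition~\ref{p1}: picking $\alpha'\in(1-H+\delta,\alpha)$ so that $s\mapsto\sigma(s,X^{\varepsilon,v^{\varepsilon}}_s)$ lies in $C^{\alpha'}$ with norm controlled by $1+\|X^{\varepsilon,v^{\varepsilon}}\|_{1-\alpha}$, and using $\|\sqrt{\varepsilon}B^H\|_{1-\alpha+\delta}\to 0$ in probability, we obtain
\[
\Bigl|\sqrt{\varepsilon}\int_0^t \sigma(s,X^{\varepsilon,v^{\varepsilon}}_s)\,dB^H_s\Bigr|
\le C\bigl(1+\|X^{\varepsilon,v^{\varepsilon}}\|_{1-\alpha}\bigr)\,\sqrt{\varepsilon}\,\|B^H\|_{1-\alpha+\delta}\ \xrightarrow{\mathbb{P}}\ 0.
\]

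\textbf{Step 3 (Passage to the limit in the controlled term).} This is the main obstacle and is handled in the spirit of Lemma~\ref{lem-4-5}. Split
\[
\int_0^t\sigma(s,X^{\varepsilon,v^{\varepsilon}}_s)\,dv^{\varepsilon}_s-\int_0^t\sigma(s,X_s)\,dv_s
=\int_0^t[\sigma(s,X^{\varepsilon,v^{\varepsilon}}_s)-\sigma(s,X_s)]\,dv^{\varepsilon}_s+\int_0^t\sigma(s,X_s)\,d(v^{\varepsilon}-v)_s.
\]
For the first piece, use the differentiability formula $v^{\varepsilon}=K_H\dot v^{\varepsilon}$ as in \eqref{equ-4-13}--\eqref{eq:nbd}, the Lipschitz bound on $\sigma$, Fubini, and $\|\dot v^{\varepsilon}\|_{L^2}\le\sqrt{2N}$ to majorize it by $C_N\|X^{\varepsilon,v^{\varepsilon}}-X\|_{\infty}\to 0$ almost surely. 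For the second piece, observe that $u\mapsto u^{1/2-H}\int_u^t s^{H-1/2}(s-u)^{H-3/2}\sigma(s,X_s)\,ds$ lies in $L^2([0,1]:\RR^{m\times d})$ by \eqref{equ-4-16} (with $X$ in place of $X^{0,v^n}$), so weak convergence $\dot v^{\varepsilon}\to\dot v$ in $L^2$ yields convergence to $0$ for each $t$.

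\textbf{Step 4 (Identification and conclusion).} Combining Steps~2 and~3 and sending $\varepsilon\to 0$ in \eqref{v-equation}, the limit $X$ satisfies \eqref{equ-4-9}, hence by uniqueness $X=X^{0,v}=\mathcal{G}^0(v)$ a.s. As the limit is deterministic in $v$ along every weakly convergent subsequence, the full family converges in distribution, completing the proof. The main technical difficulty is Step~3, since both the integrand and integrator in the control term are random and only the integrator converges in a weak topology; the split above isolates a strong-convergence contribution from a weak-convergence contribution, each handled by the explicit $K_H$-representation of $\mathcal{H}_H$.
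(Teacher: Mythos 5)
Your proposal is correct and follows essentially the same route as the paper: reduce to the controlled equation via Lemma~\ref{lem-4-4}, get tightness from \eqref{equ-4-8} and Fernique, kill the $\sqrt{\varepsilon}\,dB^H$ term via the H\"older estimate, and pass to the limit in the control term using exactly the decomposition and $K_H$-representation of \eqref{equ-4-13}--\eqref{equ-4-17}. The only cosmetic difference is that you invoke the Skorokhod representation theorem to argue with almost sure convergence, whereas the paper packages the same pathwise estimates as continuity of the deterministic map $F^{b,\sigma}$ on $C([0,1]:\mathbb{R}^m)\times S_N$ and applies the continuous mapping theorem.
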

\begin{proof}
Note from Lemma \ref{lem-4-4} that $\mathcal{G}^\varepsilon(\sqrt{\varepsilon}\omega+v^\varepsilon)=X^{\varepsilon,v^\varepsilon}$, where $X^{\varepsilon,v^\varepsilon}$ is the unique solution to the following SDE:
\begin{equation}\label{v-eq-varepsilon}
X_t^{\varepsilon,v^\varepsilon}=x_0+\int_0^tb(s,X_s^{\varepsilon,v^\varepsilon})ds+\int_0^t\sigma(s,X_s^{\varepsilon,v^\varepsilon})dv_s+\sqrt{\varepsilon}\int_0^t\sigma(s,X_s^{\varepsilon,v^\varepsilon})dB^H_s.
\end{equation}

For any fixed $\alpha$ with $1-H<\alpha<\min\{\frac{1}{2},\lambda,\frac{\gamma}{1+\gamma}\}$, from \eqref{equ-4-8} and Fernique's theorem (cf. \cite{MR0413238}) we have
\begin{equation}\label{equ-m-4-6}
\sup\limits_{\varepsilon\in(0,1)}\mathbb{E}\Vert X^{\varepsilon,v^\varepsilon}\Vert_{1-\alpha}^p<\infty, \ \mbox{for all } \ p\geq 1.
\end{equation}
 In particular $\{X^{\varepsilon,v^\varepsilon}\}$ is tight in $C([0,1]:\mathbb{R}^m)$. This tightness together with the compactness of $S_N$ under topology of the weak convergence yield that for any subsequence of $\{(X^{\varepsilon,v^\varepsilon}, v^\varepsilon)\}$ there is a  further subsequence (still relabeled by $\varepsilon$) such that $(X^{\varepsilon,v^\varepsilon}, v^\varepsilon)$ converges weakly to $(X, v)$ in $C([0,1]:\mathbb{R}^m)\times S_N$.
 
Define the mapping $F^{b,\sigma}: C([0,1]:\mathbb{R}^m)\times S_N \to C([0,1]:\mathbb{R}^m)$ as   
\[
F^{b,\sigma}_t(x,u)=x_0+\int_0^tb(s, x_s)ds+\int_0^t\sigma(s, x_s)du_s, \ t\in [0,1],\; (x,u)\in C([0,1]:\mathbb{R}^m)\times S_N. 
\]
It is easy to see that the right side defines a function in  $C([0,1]:\mathbb{R}^m)$ for any  $(x,u)\in C([0,1]:\mathbb{R}^m)\times S_N$. 
We will now show that  $F^{b,\sigma}$ is a continuous mapping from $C([0,1]:\mathbb{R}^m)\times S_N$ to $C([0,1]:\mathbb{R}^m)$. Let $(x^n,u^n)$ converge to $(x,u)$ in $ C([0,1]:\mathbb{R}^m)\times S_N$, and denote $u^n=K_H\dot u^n$ and $u=K_H\dot u$ with $u^n, \, u\in L^2([0,1]:\RR^d)$.  By the Lipschitz condition on $b$ and analogous arguments as in \eqref{equ-4-13}, \eqref{equ-4-15} and \eqref{equ-4-17}, we obtain
\begin{align}
&\sup_{0\leq t\leq 1}|F^{b,\sigma}_t(x^n,u^n)-F^{b,\sigma}_t(x,u)|\notag\\
&\quad\leq L\Vert x^n-x\Vert_\infty+\frac{Mc_H}{\Gamma(H-\frac12)}\Vert x^n-x\Vert_\infty\int_0^1r^{\frac{1}{2}-H}|\dot{u}^n_r|\left(\int_r^1 s^{H-\frac{1}{2}}(s-r)^{H-\frac{3}{2}}ds\right)dr\nonumber\\
  &\quad\quad+\frac{c_H}{\Gamma(H-\frac12)}\left|\int_0^1r^{\frac{1}{2}-H}\left(\int_r^1 s^{H-\frac{1}{2}}(s-r)^{H-\frac{3}{2}}\sigma(s,x_s)ds\right)(\dot{u}^n_r-\dot u_r)dr\right|\to 0,
\end{align}
as $n\to\infty$, which proves the continuity of $F^{b,\sigma}$.
 Thus, by the continuous mapping theorem, $\{ F^{b,\sigma}_\cdot(X^{\varepsilon, v^\varepsilon},v^\varepsilon)=x_0+\int_0^\cdot b(s, X^{\varepsilon, v^\varepsilon}_s)ds+\int_0^\cdot\sigma(s, X^{\varepsilon, v^\varepsilon}_s)dv^\varepsilon_s\}$ converges in distribution to $\{ F^{b,\sigma}_\cdot(X,v)=x_0+\int_0^\cdot b(s, X_s)ds+\int_0^\cdot\sigma(s, X_s)dv_s\}$ in $C([0,1]:\mathbb{R}^m)$.
 
Now for any $0<\delta<\alpha-(1-H)$, from the observation in \eqref{eqn-m-3} with $(x, g)$ replaced by $(X^{\varepsilon, v^\varepsilon},B^H)$, we obtain
\begin{eqnarray*}
\left\Vert \int_0^\cdot\sigma(s,X^{\varepsilon, v^\varepsilon}_s)dB^H_s\right\Vert_{1-
\alpha}&\leq& C(1+\Vert X^{\varepsilon, v^\varepsilon}\Vert_\infty+\Vert X^{\varepsilon, v^\varepsilon}\Vert_{1-\alpha})\Vert B^H\Vert_{1-\alpha+\delta}\\
&\leq& C(1+\Vert X^{\varepsilon, v^\varepsilon}\Vert_{1-\alpha})\Vert B^H\Vert_{1-\alpha+\delta}.\\
\end{eqnarray*}
Then, from \eqref{equ-m-4-6} we get  
\[
\sup_{\varepsilon\in(0,1)}\mathbb{E}\left\Vert \int_0^\cdot\sigma(s,X^{\varepsilon, v^\varepsilon}_s)dB^H_s\right\Vert_{1-
\alpha}^p<\infty, \ \mbox{for all }\ p\geq 1,
\]
which implies that $\{\sqrt{\varepsilon}\int_0^\cdot\sigma(s,X^{\varepsilon, v^\varepsilon}_s)dB^H_s\}$ converges in probability to $0$ in $C([0,1]:\mathbb{R}^m)$.

Combining the above observations we now have that
\[
x_0+\int_0^\cdot b(s,X_s^{\varepsilon,v^\varepsilon})ds+\int_0^\cdot\sigma(s,X_s^{\varepsilon,v^\varepsilon})dv_s+\sqrt{\varepsilon}\int_0^\cdot\sigma(s,X_s^{\varepsilon,v^\varepsilon})dB^H_s
\]
 converges in distribution in $C([0,1]:\mathbb{R}^m)$ to 
 \[
x_0+\int_0^\cdot b(s, X_s)ds+\int_0^\cdot\sigma(s, X_s)dv_s.
 \]
 
 Since $X^{\varepsilon,v^\varepsilon}$ converges  to $X$ in distribution in $C([0,1]:\mathbb{R}^m)$ and that the solution to \eqref{equ-4-9} is unique, we conclude that $X^{\varepsilon,v^\varepsilon}=\mathcal{G}^\varepsilon(\sqrt{\varepsilon}\omega+v^\varepsilon)$ converges to $X^{0,v}=\mathcal{G}^{0}(v)$ in distribution in $C([0,1]:\mathbb{R}^m)$.
\end{proof}
$\,$\\ 

\noindent \begin{proof}[Proof of Theorem \ref{LPmult}] 
Note that the rate function $I$ defined in \eqref{eq:rfmul} coincides with the one in \eqref{eq-rate} for $\mathcal{G}^0$ defined in \eqref{G-0-4}.
From Theorem \ref{general-LP}, to prove Theorem \ref{LPmult}, it is sufficient to verify the conditions in Assumption \ref{assum-G} for $ \mathcal{G}^\varepsilon$ defined in Lemma \ref{lem-4-3}  and $\mathcal{G}^0$ in \eqref{G-0-4}.
Lemma \ref{lem:sufftone} shows that the Assupmtion \ref{assum-G}(i) is satisfied. Also,
since $S_N$ is compact in the weak topology, Lemma \ref{lem-4-5} implies that the set 
\[
\Gamma_N=\{\mathcal{G}^0(v): v\in S_N\}
\]
is a compact subset of $\mathcal{E}$, which verifies  Assumption \ref{assum-G}(ii), . 
\end{proof}

\section*{Acknowledgements}
Research  of AB is supported in part by the National Science Foundation (DMS-1814894 and DMS-1853968).

 \def\cprime{$'$} \def\cprime{$'$}

\end{document}